\documentclass[10pt]{amsart}


\usepackage{amsmath}
\usepackage{amsthm}
\usepackage{amsfonts}
\usepackage{amssymb,latexsym}
\usepackage[all]{xy}
\usepackage{graphicx}
\usepackage[mathscr]{eucal}
\usepackage{verbatim}
\usepackage{hyperref}

\addtolength{\textwidth}{20pt} \addtolength{\evensidemargin}{-10pt}
\addtolength{\oddsidemargin}{-10pt} \addtolength{\textheight}{15pt}

\raggedbottom

\theoremstyle{plain}
    \newtheorem{thm}{Theorem}[section]
    \newtheorem{prop}[thm]{Proposition}

    \newtheorem{cor}[thm]{Corollary}

\theoremstyle{definition}
    \newtheorem{defn}[thm]{Definition}

\theoremstyle{remark}
    
    \newtheorem{example}[thm]{Example}

\numberwithin{equation}{section}


\newcommand{\rar}{\ensuremath{\rightarrow}}
\newcommand{\lrar}{\ensuremath{\longrightarrow}}

\newcommand{\la}{\langle}
\newcommand{\ra}{\rangle}
\newcommand{\Hom}{\textup{Hom}}
\newcommand{\StMod}{\textup{StMod}}
\newcommand{\stmod}{\textup{stmod}}
\newcommand{\Mod}{\textup{Mod}}

\newcommand{\uHom}{\underline{\Hom}}
\newcommand{\Ker}{\textup{Ker}}

\newcommand{\G}{\mathcal G}

\newcommand{\stk}[1]{\stackrel{#1}{\rightarrow}}
\newcommand{\lstk}[1]{\stackrel{#1}{\longrightarrow}}
\newcommand{\Ext}{\textup{Ext}}
\newcommand{\Gal}{\textup{Gal}}

\newcommand{\thick}{\textup{thick}}

\newcommand{\End}{\text{End}}

\newcommand{\ftwo}{\mathbb{F}_2}


\def\Tate{\operatorname{\hat{H}}\nolimits}
\def\HHHH{\operatorname{\hat{H}}\nolimits}
\def\Hom{\operatorname{Hom}\nolimits}
\def\Ext{\operatorname{Ext}\nolimits}
\def\hExt{\operatorname{\widehat{Ext}}\nolimits}


\begin{document}

\title[Auslander-Reiten sequences]{Auslander-Reiten sequences as appetizers for homotopists and arithmeticians}
\date{\today}

\author{Sunil K. Chebolu}
\address{Department of Mathematics \\
Illinois State University \\
Normal, IL 61790, USA} \email{schebol@ilstu.edu}

\author{J\'{a}n Min\'{a}\v{c} $^{*}$}
\address{Department of Mathematics\\
University of Western Ontario\\
London, ON N6A 5B7, Canada}
\email{minac@uwo.ca}
\thanks{J\'{a}n Min\'{a}\v{c} was supported in part by Natural Sciences and Engineering Research Council of Canada grant R3276A01}

\keywords{Auslander-Reiten sequence, Tate cohomology, generating hypothesis,
Klein four modules, stable module category, Galois groups, Galois cohomology.}
\subjclass[2000]{Primary 20C20, 20J06; secondary 55P42}

\begin{abstract}
We introduce Auslander-Reiten sequences for group algebras and give
several recent applications. The first part of the paper is devoted to some
fundamental problems in Tate cohomology  which are motivated by
homotopy theory. In the second part of the paper we interpret Auslander-Reiten
sequences in the context of Galois theory and connect them to some important
arithmetic objects.
\end{abstract}

\dedicatory{Dedicated to Professor John Labute with admiration, respect and friendship.}

\maketitle
\thispagestyle{empty}

\tableofcontents

\section{Introduction}

In the past the second author had an opportunity to discuss with John Labute
some beautiful aspects of modular representation theory and they both came to
the conclusion that the techniques from modular representation theory could be
applied effectively to study problems in number theory, particularly in field
theory and Galois theory. But unfortunately some of these techniques from
representation theory are not well-known to people working in number theory.

Several times recently, we ran into Auslander-Reiten sequences in our work and
we began to appreciate the power and elegance of these sequences. These
sequences are some special non-split short exact sequences which are very
close to being split sequences.  They were introduced by Auslander and Reiten
in the early 70s \cite{AR}, and they have been among the most important tools
from modular representation theory.

The goal of this article is to introduce Auslander-Reiten sequences and to
give some applications. This article is by no means  a historical survey of
Auslander-Reiten theory; it focuses only on some selective applications from
our recent joint work. The treatment here will be particularly suitable for
homotopy theorists and number theorists.  We hope that the readers who are not
specialists in representation theory might be inspired further to learn more
about Auslander-Reiten sequences and investigate possible Galois field
theoretic applications.

We would like to thank the referee for his/her valuable suggestions which have
helped us to improve the exposition of our paper

\section{Auslander-Reiten sequences} Throughout we let $G$ denote a finite group
and $k$ a field of characteristic $p$ that divides the order of $G$. We denote
by $\Lambda$ the group algebra $kG$. Although Auslander-Reiten theory is often
studied in the broader context of Artin algebras, for simplicity we restrict
ourselves to the most interesting case of group algebras.

\begin{defn}
A short exact sequence of $\Lambda$-modules
\[ \epsilon  \colon  \quad 0 \lrar M \lstk{\alpha} E \lstk{\beta} N \lrar 0\]
is an Auslander-Reiten (AR) sequence if the following three conditions are
satisfied:
\begin{itemize}
\item $\epsilon$ is a non-split sequence.
\item $\alpha$ is left almost split, i.e., any map $\phi \colon M \rar M'$ that is not a split monomorphism
factors through $E$ via $\alpha$.
\item $\beta$ is right almost split, i.e., any map $\psi \colon N' \rar N$ that is not a split epimorphism
factors through $E$ via $\beta$.
\end{itemize}
\[
\xymatrix{
&    & & N'\ar[d]^{\psi} \ar@{.>}[dl] &\\
0\ar[r] & M \ar[r]^{\alpha} \ar[d]_{\phi} & E \ar[r]^{\beta} \ar@{.>}[dl] & N \ar[r] &  0\\
& M' & & &
}
\]
\end{defn}

There are several other equivalent definitions of an Auslander-Reiten
sequence. The one we have given here is perhaps the most symmetric of all.
Note that a split short exact sequence satisfies the last two conditions. This
justifies why Auslander-Reiten sequences are also called \textit{almost split
sequences}. Further, we note that these conditions also imply that the terms
$M$ and $N$ in an AR sequence have to be indecomposable and non-projective.

\begin{example} \textbf{(One-up and One-down)} Let us start with a very simple example.
Let $\Lambda = k[x]/x^p$ where $p$ is the characteristic of $k$. Observe
that $\Lambda$ is a group algebra $kG$ with a $G$ cyclic group
of order $p$. Then for each integer
$i$ strictly between $0$ and $p$,  the short exact sequence
\[ 0 \lrar k[x]/x^{i} \lstk{(j, -\pi)} k[x]/x^{i+1} \oplus k[x]/x^{i-1}  \lstk{}  k[x]/x^{i} \lrar 0\]
is easily shown (exercise left to the reader)
to be an Auslander-Reiten sequence. Here $j$ is the obvious injection into the first summand of the
middle term, and $\pi$ is the obvious surjection on the second summand.
\end{example}

The remarkable result is the existence of these
sequences:

\begin{thm} \cite{aus-rei-sma}
Given any indecomposable non-projective $\Lambda$-module $N$, there exists a
unique (up to isomorphism of short exact sequences) Auslander-Reiten sequence
$\epsilon$ ending in $N$. Moreover the first term $M$ of the AR-sequence
$\epsilon$ is isomorphic to $\Omega^2 N$.
\end{thm}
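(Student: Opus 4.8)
The plan is to prove this classical existence theorem of Auslander and Reiten in the group-algebra setting, where stable homomorphism spaces are particularly well behaved. The strategy rests on two pillars: the existence of a nonzero \emph{socle element} in the Tate cohomology group $\hExt^0(N,\Omega^2 N) \cong \uHom(\Omega^2 N, \Omega^2 N)$ viewed as a module over the stable endomorphism ring $\uEnd(N)$, and the interpretation of that socle element, via the natural isomorphism $\uHom(\Omega^2 N, \Omega^2 N)\cong \Ext^1_{\Lambda}(N,\Omega^2 N)$, as the class of the desired sequence $\epsilon$. First I would recall that for the group algebra $\Lambda = kG$ the syzygy functor $\Omega$ is an equivalence on the stable category $\stmod(\Lambda)$, with inverse $\Omega^{-1}$, and that there is a perfect duality (stable duality / Auslander--Reiten duality) of the form $\uHom(X,Y) \cong D\,\uHom(Y, \Omega^2 X)$ for finitely generated modules, where $D = \Hom_k(-,k)$. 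This duality is what makes the construction canonical.

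Second, I would exploit the duality to produce the AR sequence. Set $\Gamma = \uEnd(N)$; since $N$ is indecomposable non-projective, $\Gamma$ is a local finite-dimensional $k$-algebra, so it has a simple socle as a left (and right) module, and $D(\Gamma/\rad\Gamma)$ sits inside $D\Gamma$ as its socle. Using the duality $\uHom(N,\Omega^2 N)\cong D\,\uHom(N,N) = D\Gamma$, choose a nonzero element $\eta$ in the socle of $D\Gamma$; transporting back, $\eta$ corresponds to a nonzero class in $\Ext^1_{\Lambda}(N,\Omega^2 N)$, hence to a non-split short exact sequence $\epsilon\colon 0\to \Omega^2 N \to E \to N\to 0$. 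The key computation is then to check that right almost splitness of $\beta$ is precisely the statement that $\eta$ is annihilated by $\rad\Gamma$: a map $\psi\colon N'\to N$ lifts through $\beta$ iff the pullback of $\epsilon$ along $\psi$ splits iff $\psi^*\eta = 0$ in $\Ext^1_{\Lambda}(N',\Omega^2 N)$; and when $N'$ is indecomposable this happens for all non-split-epi $\psi$ exactly because such $\psi$ factor through $\rad\Gamma$ after composing, which kills the socle element. Left almost splitness of $\alpha$ then follows by the symmetric argument using the dual formulation, or simply by dualizing with $\Omega^{-1}$; alternatively one invokes the standard lemma that in a short exact sequence one of the two almost-split conditions implies the sequence is an AR sequence.

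Third, for \emph{uniqueness} I would argue that any AR sequence ending in $N$ defines a class in $\Ext^1_{\Lambda}(N, M)$ that must be a generator of the socle of $\Ext^1_{\Lambda}(N,-)$-functor in the appropriate sense; more concretely, two AR sequences ending in $N$ have isomorphic end terms and their classes differ by an automorphism of $N$ and an automorphism of $M$, and a standard diagram chase (using that a map between the middle terms compatible with the identity on $N$ and some iso on $M$ must itself be an isomorphism, since it is an iso modulo projectives and the sequences are non-split) yields an isomorphism of short exact sequences. Finally, the identification $M\cong \Omega^2 N$ is built into the construction above, but to see it intrinsically one notes that $M \cong \Omega^2 N$ is forced by the duality $\uHom(N,M)\cong D\uEnd(N)$ together with the fact that the AR sequence is the ``universal'' non-split extension. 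The main obstacle, and the step deserving the most care, is the verification that the socle element of $D\uEnd(N)$ really does produce a sequence whose $\beta$ is right almost split for \emph{all} target maps, not merely those from indecomposables --- this requires the reduction to indecomposable $N'$ via Krull--Schmidt and a careful analysis of when a non-isomorphism $N'\to N$ becomes zero in the stable category after composing with radical endomorphisms, which is exactly where the local structure of $\uEnd(N)$ and the nondegeneracy of the stable duality pairing are essential.
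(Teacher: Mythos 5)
Your proof takes a genuinely different route from the paper's. The paper separates the two halves: uniqueness is proved immediately after the theorem statement by running the comparison maps $f\colon M\to M'$ and $g\colon M'\to M$ produced by the almost-split properties into a Fitting-lemma argument (the composite $gf$ cannot be nilpotent, hence is an automorphism of the finite-length indecomposable $M$); existence is Krause's proof in Section~\ref{sec:Krause}, which invokes the Brown representability theorem for $\StMod(kG)$ to represent the functor $\Hom_\Gamma(\uHom_{kG}(Z,-),I)$, reads off the AR triangle from the class $\gamma=\Psi(\omega)$, and only \emph{afterwards} identifies the representing object with $\Omega Z$ (hence the first term with $\Omega^2 N$) by relating the construction to the AR duality pairing of Section~5. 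You instead \emph{assume} Auslander--Reiten duality up front and manufacture the almost-split sequence as the extension corresponding to a socle generator of $\Ext^1_\Lambda(N,\Omega^2 N)\cong D\,\uEnd(N)$ as a module over the local ring $\uEnd(N)$. This is the classical Auslander--Reiten construction; it is leaner if one already has the nondegenerate trace pairing in hand, replacing representability by a finite-dimensional socle computation and making the identification of the first term with $\Omega^2 N$ automatic rather than an afterthought. What the paper's route buys is that it does not presuppose the duality --- the duality drops out of Brown representability --- and that it lives natively in $\StMod(kG)$, the setting the rest of the paper uses. Your uniqueness argument is in the same spirit as the paper's but much more compressed; you assert that a comparison map on middle terms ``must be an isomorphism since it is an iso modulo projectives and the sequences are non-split,'' whereas the paper actually justifies this via Fitting's lemma applied to $gf\in\End(M)$.

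A few of your displayed identifications are off by a syzygy shift and should be repaired before this could be called a proof. You write $\hExt^0(N,\Omega^2 N)\cong\uHom(\Omega^2 N,\Omega^2 N)$, but $\hExt^0(N,\Omega^2 N)=\uHom(N,\Omega^2 N)=\hExt^{-2}(N,N)$, which is not the degree-zero stable endomorphism ring. Likewise you state stable duality as $\uHom(X,Y)\cong D\,\uHom(Y,\Omega^2 X)$; with the paper's pairing $\uHom_{kG}(L,\Omega M)\times\uHom_{kG}(M,L)\to k$ the correct degree-zero form is $\uHom(X,Y)\cong D\,\uHom(Y,\Omega X)$, equivalently the AR-translate form $\Ext^1_\Lambda(X,Y)\cong D\,\uHom(Y,\Omega^2 X)$ with $\tau=\Omega^2$ since $kG$ is symmetric. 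The group you actually need is $\Ext^1_\Lambda(N,\Omega^2 N)\cong D\,\uEnd(N)$, and with that correction your socle/radical argument for right almost splitness (a non-retraction $\psi\colon N'\to N$ has $\mathrm{Im}(\psi_*)\subseteq\rad\uEnd(N)$, which the socle element annihilates) goes through exactly as you describe and matches the computation Krause performs in the paper. Finally, when you appeal to the standard lemma that one almost-split condition plus non-splitness yields an AR sequence, note that it also requires the first term to be indecomposable; here $\Omega^2 N$ is indecomposable because $N$ is and $\Omega$ is an equivalence on $\stmod(kG)$, which is worth saying explicitly.
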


In Section \ref{sec:Krause} we present a non-standard proof due to Krause of
the existence of AR sequences. This uses the Brown representability theorem
from algebraic topology. For now, we assume existence and show uniqueness.

Suppose we have two AR-sequences ending in an indecomposable non-projective $kG$-module $L$:
\[ 0 \lrar M \lstk{\alpha} E \lstk{\beta} L \lrar 0\]
\[ 0 \lrar M' \lstk{\alpha'} E' \lstk{\beta'} L \lrar 0\]
The defining property of AR sequences then gives the following commutative diagram.
\[
\xymatrix{
0 \ar[r] & M  \ar[r]^{\alpha} \ar@{.>}[d]_{f}  & E \ar[r]^{\beta} \ar@{.>}[d] & L \ar[r] \ar[d]^{=} & 0 \\
0 \ar[r] & M' \ar[r]^{\alpha'} \ar@{.>}[d]_{g}  & E' \ar[r]^{\beta'} \ar@{.>}[d] & L \ar[r] \ar[d]^{=} & 0 \\
0 \ar[r] & M \ar[r]^{\alpha} & E \ar[r]^{\beta} & L \ar[r] & 0
}
\]
The composite $\phi := g f$  cannot be nilpotent, otherwise $\beta$ would be
a split epimorphism. (This is an easy diagram-chase exercise.) Now since $M$ is a module of
finite length, for sufficiently large values of $n$, we must have
\[ M \cong \text{Im}(\phi^n) \oplus \Ker(\phi^n).  \]
Since $M$ is indecomposable, and $\phi$ is not nilpotent, it follows that $M
\cong \text{Im}(\phi^n)$ and $0 = \Ker(\phi^n)$. This shows that $\phi^n$, and
hence $\phi$, is an isomorphism.  Therefore $f$ is the left inverse of $g$.
Interchanging the roles of the two AR-sequences above, it follows similarly
that $f$ is the right inverse of $g$. Thus we have $M \cong M'$. The five
lemma now tells us that $E \cong E'$. Thus the two AR-sequences are
isomorphic as short exact sequences.

\section{Stable module category}
We now introduce the stable module category. We will work mostly in this
category for the first half of the paper. The \emph{stable module category}
$\StMod(kG)$ of $G$ is the category obtained from the category $\Mod(kG)$ of
(left) $kG$-modules by killing the projective modules. More precisely, it is
the category whose objects are $kG$-modules and whose morphisms are
equivalence classes of $kG$-module homomorphisms, where two homomorphisms are
equivalent if their difference factors through a projective module (such a map
is called \emph{projective}). If $M$ and $N$ are two $kG$-modules, then we
write $\uHom_{kG}(M, N)$ for the $k$-vector space of maps between $M$ and $N$
in the stable module category. It is easy to see that a $kG$-module is
projective if and only if it is the zero object in $\StMod(kG)$. The compact
stable category $\stmod(kG)$ is the full subcategory of $\StMod(kG)$
consisting of the
 finitely generated (equivalently, finite-dimensional) $kG$-modules.

The category $\StMod(kG)$ admits a natural tensor triangulated structure which
we now explain.  If $M$ is a $kG$-module then define its desuspension
$\Omega\, M$ to be the kernel of a surjective map from a projective module to $M$. By
Schanuel's Lemma, the new module thus obtained is well-defined up to
projective summands and therefore defines an endofunctor
\[ \Omega\colon \StMod(kG) \lrar \StMod(kG) . \]
Using the fact that $kG$ is a Frobenius algebra (injectives and projective
coincide) one can show that $\Omega$ is an equivalence. The inverse functor
$\Omega^{-1}$ sends $M$ to the cokernel of an injective map from $M$ to an
injective module. This allows one to talk about negative powers of $\Omega$.

The importance of the stable module category can be seen in the following
useful and well-known fact.

\begin{prop}
Let $M$ and $N$ be $kG$-modules. If $n$ is any positive integer, then
\[
  \uHom_{kG}(\Omega^n \, M, N) \cong \Ext_{kG}^{\, n}\,(M, N) \cong
  \uHom_{kG}(M, \Omega^{-n}\,N).
\]
\end{prop}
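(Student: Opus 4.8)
The plan is to establish the first isomorphism $\uHom_{kG}(\Omega^n M, N) \cong \Ext^n_{kG}(M,N)$ and then obtain the second by a symmetric argument (or by adjunction), reducing everything to the case $n=1$ by induction using that $\Omega$ is an equivalence on $\StMod(kG)$. First I would recall that $\Ext^n_{kG}(M,N)$ is computed from a projective resolution $\cdots \to P_1 \to P_0 \to M \to 0$, and that by definition the syzygy $\Omega^n M$ sits inside this resolution: one has short exact sequences $0 \to \Omega^{i+1}M \to P_i \to \Omega^i M \to 0$ with $\Omega^0 M = M$. Dimension shifting along these sequences gives $\Ext^n_{kG}(M,N) \cong \Ext^1_{kG}(\Omega^{n-1}M, N)$ for all $n \geq 1$, so it suffices to prove $\uHom_{kG}(\Omega L, N) \cong \Ext^1_{kG}(L,N)$ for an arbitrary module $L$ (then take $L = \Omega^{n-1}M$ and use $\Omega(\Omega^{n-1}M) \cong \Omega^n M$ in $\StMod$).

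For the base case, fix a surjection $\pi\colon P \to L$ from a projective module with kernel $\Omega L$, giving $0 \to \Omega L \stk{\iota} P \stk{\pi} L \to 0$. Applying $\Hom_{kG}(-,N)$ yields the exact sequence
\[
\Hom_{kG}(P,N) \lstk{\iota^*} \Hom_{kG}(\Omega L, N) \lrar \Ext^1_{kG}(L,N) \lrar \Ext^1_{kG}(P,N) = 0,
\]
the last term vanishing since $P$ is projective. Thus $\Ext^1_{kG}(L,N)$ is the cokernel of $\iota^*$. On the other hand, $\uHom_{kG}(\Omega L, N)$ is by definition $\Hom_{kG}(\Omega L, N)$ modulo the subspace of maps that factor through a projective module. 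The key point is to identify this subspace with the image of $\iota^*$: a map $\Omega L \to N$ factoring through a projective is, I claim, exactly one that extends along $\iota$ to a map $P \to N$. One direction is immediate since $P$ is projective. For the converse, if $f\colon \Omega L \to N$ factors as $\Omega L \to Q \to N$ with $Q$ projective, then since $Q$ is also injective ($kG$ is a Frobenius algebra) the map $\Omega L \to Q$ extends along the monomorphism $\iota$ to $P \to Q$, and composing with $Q \to N$ gives the desired extension $P \to N$ restricting to $f$. Hence the projective maps are precisely $\operatorname{Im}(\iota^*)$, and the quotient $\uHom_{kG}(\Omega L, N)$ agrees with $\operatorname{coker}(\iota^*) \cong \Ext^1_{kG}(L,N)$.

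The main obstacle, and the one genuinely using the structure of group algebras rather than formal homological algebra, is the converse direction just described: showing that a stably trivial map out of $\Omega L$ extends over $P$. This is where the Frobenius property (projective = injective) is essential, so that $Q$ being projective lets us extend $\Omega L \to Q$ along the inclusion $\iota \colon \Omega L \hookrightarrow P$. Once this is in hand, the second isomorphism follows dually: one writes $\uHom_{kG}(M, \Omega^{-n}N) \cong \uHom_{kG}(\Omega^n M, N)$ using that $\Omega$ is an equivalence (so $\uHom_{kG}(A, \Omega^{-n}B) \cong \uHom_{kG}(\Omega^n A, B)$ for all $A, B$), which chains with the first isomorphism; alternatively one runs the same dimension-shifting argument with an injective coresolution of $N$. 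I would present the $\Omega$-equivalence route since it is shortest. Finally one should remark that all isomorphisms are natural in $M$ and $N$, which is clear from the construction.
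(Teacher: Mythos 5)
The paper states this proposition as a ``useful and well-known fact'' and does not supply a proof, so there is no in-text argument to compare against. Your proof is correct and is essentially the standard one: dimension-shift $\Ext^n_{kG}(M,N)\cong\Ext^1_{kG}(\Omega^{n-1}M,N)$ down to the case $n=1$, and then identify the image of $\iota^*\colon\Hom_{kG}(P,N)\to\Hom_{kG}(\Omega L,N)$ with the subspace of maps factoring through a projective. You correctly isolate the one nontrivial step, namely that a stably trivial map $\Omega L\to N$ extends over $P$, and correctly invoke the Frobenius property of $kG$ (projectives are injective) so that the composite $\Omega L\to Q$ extends along the inclusion $\iota$. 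Your handling of the second isomorphism via the autoequivalence $\Omega$ is also fine. One small point worth making explicit: the middle term $\Ext^n_{kG}(M,N)$ is a genuine $\Hom$-group in $\Mod(kG)$, not a stable one, and it is only because of the self-injectivity of $kG$ that it coincides with a stable $\Hom$-group on either side; your proof does locate this correctly, but a reader could be reminded that this is exactly why the statement fails for general (non-Frobenius) algebras.
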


In particular, group cohomology is elegantly encoded in the stable module
category. In fact, allowing $n$ to be any integer, one recovers the Tate
cohomology ring of $G$ in this way as the (graded) ring of self maps of the
trivial representation. Under this isomorphism, the multiplication in the Tate
ring of $G$ corresponds to composition in $\uHom(\Omega^*\,k, k)$.

Now we describe the triangles in the stable module category. These arise from
short exact sequences in the module category in the following way. Suppose
\[
  0 \lrar L \lrar M \lrar N \lrar 0
\]
is a short exact sequence of $kG$-modules. This short exact sequence
determines a unique class in $\Ext^1_{kG}(N, L)$ which under the isomorphism
given in the above proposition corresponds to a map $N \rar \Omega^{-1} L$.
Thus we get a diagram
\[
  L \lrar M \lrar N \lrar \Omega^{-1}L. \ \ \ (*)
\]
A triangle in $\StMod(kG)$ is any such diagram which is isomorphic to a
diagram which arises as above (*). These triangles and the suspension functor
satisfy the standard axioms for a triangulated category; see
\cite{carlson-modulesandgroupalgebras} for example.

The tensor product of two $kG$-modules (with diagonal $G$-action on the tensor
product of $k$-vector spaces) descends to a well-behaved tensor product in the
stable module category. (This follows from the fact that if $M$ and $N$ are
$kG$-modules with either $M$ or $N$ projective, then $M \otimes_k N$ is
projective.) The trivial representation $k$ of $G$ serves as a unit for the
tensor product.

\section{Krause's proof of the existence of AR sequences}\label{sec:Krause}
The proof of the existence of an Auslander-Reiten sequence can be found in \cite{aus-rei-sma}.
We present a different avatar of Auslander-Reiten sequences in the stable module category,
and prove their existence following Krause \cite{Krause-AR}.

\begin{defn}
A triangle $A \lstk{\alpha} B \lstk{\beta} C \lstk{\gamma} \Omega^{-1} A $ in
$\StMod(kG)$ is an Auslander-Reiten triangle if the following three conditions
hold.
\begin{itemize}
\item $\gamma \ne 0$.
\item Every map $f \colon A \lrar A'$ which is not a section factors through $\alpha$.
\item Every map $g \colon C' \lrar C$ which is not a retraction factors through $\beta$.
\end{itemize}
\end{defn}

The following theorem is crucial in what follows. This is the Brown
representability theorem for $\StMod(kG)$.

\begin{thm} (Brown)
A contravariant functor $F \colon \StMod(kG) \lrar \text{Ab}$ that is exact and sends coproducts to products
is representable, i.e., there exists a module $M_F$ in $\StMod(kG)$ such that
\[ F(-) \cong \uHom_{kG}(-, M_F).\]
\end{thm}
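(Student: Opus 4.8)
The plan is to run the classical Brown-representability argument --- a countable ``small object'' construction of the representing module as a homotopy colimit --- inside $\StMod(kG)$, using two standard facts about this category: arbitrary coproducts exist (they are just direct sums of modules), and $\StMod(kG)$ is compactly generated, with the finite-dimensional modules, i.e.\ the objects of $\stmod(kG)$, as a set of compact generators. Throughout I use the Yoneda dictionary: for a module $M$, a natural transformation $\vartheta\colon\uHom_{kG}(-,M)\Rightarrow F$ is the same datum as an element $u\in F(M)$, via $\vartheta_X(\phi)=F(\phi)(u)$. As a seed I would fix a set $\{G_i\}_{i\in I}$ of representatives for the isomorphism classes of finite-dimensional $kG$-modules and set $M_0=\coprod_{i\in I}\coprod_{x\in F(G_i)}G_i$. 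Since $F$ turns coproducts into products, $F(M_0)=\prod_i\prod_{x\in F(G_i)}F(G_i)$; the element $u_0$ with $(i,x)$-coordinate equal to $x$ makes the associated $\vartheta_0$ \emph{surjective} on every object of $\stmod(kG)$, because the inclusion of the $(i,x)$-th summand $G_i\hookrightarrow M_0$ is carried by $F$ to $x$.

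Next I would iteratively kill the kernel of $\vartheta$ on compact objects. Given $(M_n,u_n)$, for each $i$ let $Z_{i,n}=\Ker\!\big(\vartheta_n\colon\uHom_{kG}(G_i,M_n)\to F(G_i)\big)$, form $K_n=\coprod_i\coprod_{\phi\in Z_{i,n}}G_i$ with its tautological map $\kappa_n\colon K_n\to M_n$, and complete to a triangle $K_n\stk{\kappa_n}M_n\rar M_{n+1}\rar\Omega^{-1}K_n$. Applying the exact contravariant functor $F$ yields an exact sequence $F(\Omega^{-1}K_n)\to F(M_{n+1})\to F(M_n)\stk{F(\kappa_n)}F(K_n)$; since every component of $\kappa_n$ lies in some $Z_{i,n}$ we get $F(\kappa_n)(u_n)=0$, so $u_n$ lifts along $F(M_{n+1})\to F(M_n)$ to some $u_{n+1}\in F(M_{n+1})$. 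Iterating produces a tower $M_0\rar M_1\rar M_2\rar\cdots$ with compatible elements $u_n$, and by construction each element of $Z_{i,n}$ becomes zero when pushed into $\uHom_{kG}(G_i,M_{n+1})$.

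Now set $M_F=\mathrm{hocolim}_n M_n$, the cone of $1-\mathrm{shift}\colon\coprod_n M_n\to\coprod_n M_n$. Exactness of $F$ and the coproduct-to-product property let me lift the compatible family $(u_n)$ to an element $u\in F(M_F)$ whose image in each $F(M_n)$ is $u_n$; let $\vartheta\colon\uHom_{kG}(-,M_F)\Rightarrow F$ be the corresponding transformation. I then check $\vartheta_X$ is an isomorphism for all $X$. For a compact $X=G_i$: surjectivity is inherited from $\vartheta_0$ through $\uHom_{kG}(G_i,M_0)\to\uHom_{kG}(G_i,M_F)$; for injectivity, any map $G_i\to M_F$ out of a compact object factors through a finite stage $M_n$, and if $\vartheta$ kills it then its image in $\uHom_{kG}(G_i,M_n)$ lies in $Z_{i,n}$, hence already vanishes in $\uHom_{kG}(G_i,M_{n+1})$, so the original map to $M_F$ is zero. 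Finally, the full subcategory of all $X$ with $\vartheta_X$ invertible is closed under (de)suspension, under triangles (five lemma applied to the long exact sequences obtained from $\uHom_{kG}(-,M_F)$ and $F$), and under coproducts (both functors send coproducts to products); it contains every compact object, so by compact generation it is all of $\StMod(kG)$, and we obtain $F(-)\cong\uHom_{kG}(-,M_F)$.

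The technical heart, and the step I expect to cost the most, is the fact powering the injectivity check --- that a morphism from a compact object into $\mathrm{hocolim}_n M_n$ factors through a finite stage, equivalently $\uHom_{kG}(G_i,\mathrm{hocolim}_n M_n)=\varinjlim_n\uHom_{kG}(G_i,M_n)$ --- together with the compact generation of $\StMod(kG)$ by $\stmod(kG)$, which drives the final bootstrapping step. One must also be slightly careful lifting the universal element $u$ through the homotopy colimit: there is a potential $\varprojlim^1$ obstruction, and it is precisely the exactness hypothesis on $F$ that removes it. Everything else is routine bookkeeping with the Yoneda correspondence and the long exact sequences obtained by applying $F$ to triangles.
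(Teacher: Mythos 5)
The paper does not actually prove this theorem; it states it with the attribution ``(Brown)'' and uses it as a black box in Krause's construction of Auslander-Reiten triangles, so there is no in-paper argument to compare against. Your proof is the standard Neeman-style Brown representability argument for a compactly generated triangulated category, specialized to $\StMod(kG)$, and its overall structure is sound: the seed $(M_0,u_0)$, the tower killing the kernels over compacts, the homotopy colimit, the lift of $(u_n)$ through the defining triangle (exactness of $F$ and the coproduct-to-product property give exactness of $F(M_F)\to\prod F(M_n)\to\prod F(M_n)$, and $(u_n)$ lies in the kernel of the second map), the compactness isomorphism $\uHom_{kG}(G_i,\mathrm{hocolim}_n M_n)\cong\varinjlim_n\uHom_{kG}(G_i,M_n)$, and the final bootstrap are all the right moves.

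One step is asserted without justification and does need repair. You claim that the full subcategory of objects $X$ with $\vartheta_X$ invertible is ``closed under (de)suspension,'' but this is not automatic from the definition, and you need control of suspensions \emph{before} the five lemma can be run: in a five-term ladder for a triangle $A\to B\to C\to\Omega^{-1}A$, the flanking terms involve $\Omega^{\pm1}A$ and $\Omega^{\pm1}B$, so knowing $\vartheta_A$ and $\vartheta_B$ are isomorphisms is not by itself enough to conclude $\vartheta_C$ is. The standard fix is to work instead with $\{X : \vartheta_{\Omega^n X}\text{ is an isomorphism for all }n\in\mathbb{Z}\}$; this is tautologically closed under $\Omega^{\pm1}$, still contains every compact object because $\stmod(kG)$ is closed under $\Omega^{\pm1}$, and your closure-under-triangles and closure-under-coproducts arguments then apply verbatim, exhibiting it as a localizing subcategory containing the compacts, hence all of $\StMod(kG)$. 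It is also worth flagging that compact generation of $\StMod(kG)$ by $\stmod(kG)$ --- the hypothesis powering the entire argument --- is itself a nontrivial fact about the stable module category (due essentially to Rickard), and is exactly where the specific structure of $kG$ enters.
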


Now fix a finitely generated non-projective $kG$-module $Z$ that has a local
endomorphism ring, and set $\Gamma = \uHom_{kG} (Z, Z)$. Let $I$ be the
injective envelope of $\Gamma/ \text{rad }  \Gamma$. Consider the functor
\[ \Hom_{\Gamma} (\uHom_{kG} (Z, -), I).\]
It is not hard to see that this is a contravariant functor that is exact and sends coproducts to products.
So by the Brown representability theorem this functor is representable:
\begin{equation} \label{eq:brown}
 \Psi \colon \Hom_{\Gamma} (\uHom_{kG} (Z, -), I) \cong \uHom_{kG} (- , T_Z
I).
\end{equation}

Finally consider the composite $\omega \colon \Gamma \lstk{\pi}
\Gamma/\text{rad} \; \Gamma \hookrightarrow I$, and denote by  $\gamma$ the
map $\Psi (\omega)$. Note that $\gamma$ is a map from $Z \lrar T_Z I$.
Extending this map to a triangle in $\StMod(kG)$, we get
\[\Omega T_Z I \lrar Y \lrar Z \lstk{\gamma} T_Z I.\]

\begin{thm} [\cite{Krause-AR}]
The triangle $\Omega T_Z I \lstk{\alpha} Y \lstk{\beta} Z \lstk{\gamma} T_Z I $ is an
Auslander-Reiten triangle.
\end{thm}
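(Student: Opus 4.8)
The plan is to verify the three conditions in the definition of an Auslander-Reiten triangle directly, exploiting the representability isomorphism $\Psi$ of \eqref{eq:brown} as a natural identification of the functor $\Hom_\Gamma(\uHom_{kG}(Z,-),I)$ with $\uHom_{kG}(-,T_ZI)$. The key organizing observation is that, under $\Psi$, a map $g\colon C'\to Z$ in $\StMod(kG)$ factors through $\beta$ precisely when the composite $\gamma g\colon C'\to T_ZI$ is zero (since $\beta$ and $\gamma$ are consecutive maps in a triangle, $\Im(\beta\circ -)=\Ker(\gamma\circ -)$ on Hom-sets), and that $\gamma g$ corresponds under $\Psi$ to the element $\omega\circ\uHom_{kG}(Z,g)$ of $\Hom_\Gamma(\uHom_{kG}(Z,C'),I)$. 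So the whole proof is really about understanding when composing with $\omega\colon\Gamma\to\Gamma/\mathrm{rad}\,\Gamma\hookrightarrow I$ kills a map.

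First I would check $\gamma\neq 0$: if $\gamma=0$ then $\omega=\Psi(\omega)$-preimage would be zero, but $\omega$ is nonzero because $Z$ is non-projective (so $\Gamma\neq 0$, hence $\Gamma/\mathrm{rad}\,\Gamma\neq 0$ as $\Gamma$ is local, hence the composite to the injective envelope $I$ is a nonzero monomorphism on the socle quotient — in any case $\omega\neq 0$). Naturality of $\Psi$ then forces $\gamma\neq 0$. Second, for the right-almost-split condition, let $g\colon C'\to Z$ not be a retraction. I claim $\uHom_{kG}(Z,g)\colon\uHom_{kG}(Z,C')\to\Gamma$ has image inside $\mathrm{rad}\,\Gamma$: otherwise some $h\colon Z\to C'$ has $gh\colon Z\to Z$ a unit in the local ring $\Gamma$, whence $g$ has a section, contradiction. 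Therefore $\omega\circ\uHom_{kG}(Z,g)=0$ since $\omega$ kills $\mathrm{rad}\,\Gamma$; unwinding $\Psi$, this says $\gamma g=0$, so $g$ factors through $\beta$. Third, for the left-almost-split condition on $\alpha\colon\Omega T_ZI\to Y$: a map $f\colon\Omega T_ZI\to A'$ is non-split-mono iff (rotating the triangle) the corresponding statement about the cofiber translates into a factorization question for maps out of $Z$; concretely, $f$ factors through $\alpha$ iff a suitable composite into $T_ZI$ vanishes, and one reduces this, again via $\Psi$ and the rotated triangle $Z\stk{\gamma}T_ZI\to\Omega Y\to\Omega Z$, to the already-established right-almost-split property together with the fact that a non-section out of $Z$ and the local ring condition interact as above. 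The cleanest route is Krause's: show first that the triangle is right-almost-split, then invoke the general principle that a right-almost-split triangle whose connecting map is nonzero is automatically left-almost-split (this is a formal triangulated-category lemma, proved by rotating and dualizing the factorization diagram, using that $\Gamma=\uEnd(Z)$ being local makes $Z$ indecomposable so that non-sections and non-retractions behave symmetrically).

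I expect the main obstacle to be the bookkeeping in the left-almost-split verification: unlike the right-hand condition, which falls out transparently from "$\omega$ annihilates $\mathrm{rad}\,\Gamma$", the left-hand condition concerns maps out of $\Omega T_ZI$, a module about which we have only indirect (representability) information, so one must transport everything across the triangle rotation and across $\Psi$ simultaneously. The technical heart is checking that $\mathrm{End}(\Omega T_ZI)$ inherits enough structure — ultimately that $\Omega T_ZI\cong \Omega^2 Z$ up to the expected identification, matching Theorem stated earlier — or, avoiding that, invoking the purely formal lemma that in a triangulated category an AR-type triangle is determined by either one-sided condition plus $\gamma\neq 0$. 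I would present the argument in that order: (i) $\gamma\neq 0$; (ii) right-almost-split via the radical computation; (iii) left-almost-split via the formal duality lemma, citing \cite{Krause-AR} for the triangulated-category generality.
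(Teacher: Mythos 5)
Your parts (i) and (ii) are essentially identical to the paper's: $\gamma\neq 0$ because $\gamma=\Psi(\omega)$ with $\Psi$ an isomorphism and $\omega\neq 0$; and the right-almost-split condition falls out of naturality of $\Psi$ together with the observation that a non-retraction $\rho\colon Z'\to Z$ has $\operatorname{Im}(\rho_*)\subseteq\operatorname{rad}\Gamma$, which $\omega$ kills. That much is correct and matches the paper.

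The genuine gap is in (iii). You assert that ``a right-almost-split triangle whose connecting map is nonzero is automatically left-almost-split'' as a purely formal triangulated-category lemma, attributing the symmetry to $\Gamma=\uEnd(Z)$ being local. This is not the correct hypothesis: the relevant formal lemma (which is indeed in \cite{Krause-AR}) requires that the endomorphism ring of the \emph{first} term of the triangle, i.e.\ $\uEnd(\Omega T_ZI)\cong\uEnd(T_ZI)$, be local — not $\uEnd(Z)$. Locality of $\uEnd(Z)$ was the standing hypothesis used to define $\omega$ in the first place and gives you the right-hand side; it says nothing a priori about $T_ZI$. Your proposed workaround, ``$\Omega T_ZI\cong\Omega^2 Z$ up to the expected identification,'' is also not available at this point: that identification is a \emph{consequence} of the triangle being an AR triangle (via the uniqueness theorem), so invoking it here would be circular, and you give no independent argument for it. The paper closes exactly this gap with a short but essential computation: using the representing isomorphism $\Psi$ twice, one gets
\[
\uHom_{kG}(T_ZI,T_ZI)\cong\Hom_\Gamma(\uHom_{kG}(Z,T_ZI),I)\cong\Hom_\Gamma(\Hom_\Gamma(\Gamma,I),I)\cong\Hom_\Gamma(I,I),
\]
and then observes that $I$, being the injective envelope of the simple module $\Gamma/\operatorname{rad}\Gamma$, is indecomposable, so $\Hom_\Gamma(I,I)$ is local. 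Without this step (or an equivalent one), the left-almost-split condition is not established, and the proof is incomplete.
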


\begin{proof}

$\gamma \ne 0$: Recall that $\gamma = \Psi(\omega)$, where $\Psi$ is an isomorphism of functors, and
$\omega$ is a non-zero map. Therefore $\gamma \ne 0$.

$\beta$ has the right almost split property: That is, given a map $\rho \colon
Z' \lrar Z$ that is not a retraction, we have to show that there exists a map
$ \rho' \colon Z'\lrar Y$ making the following diagram commutative.
\[
\xymatrix{
& & Z' \ar[d]^{\rho} \ar@{.>}[dl]_{\rho'} & \\
\Omega T_Z I \ar[r] & Y \ar[r] & Z \ar[r]^{\gamma} & T_Z I
}
\]
For this, it is enough to know that $\gamma \rho = 0$, since the bottom row in
the above diagram is an exact triangle.
Since $\rho$ is not a retraction, $\text{Im} \, \uHom_{kG}(Z, Z') \lstk{\rho_*}
\uHom_{kG}(Z , Z) = \Gamma$ does not contain the identity map. Consequently
$\text{Im} (\rho_*)$ is contained in $\text{rad} \, \Gamma$.

The naturality of the isomorphism $\Psi$ yields the following commutative
diagram.
\[
\xymatrix{
\Hom_{\Gamma} (\uHom_{kG} (Z, Z), I) \ar[r]^{\ \ \cong} \ar[d] & \uHom_{kG} (Z , T_Z I) \ar[d] \\
\Hom_{\Gamma} (\uHom_{kG} (Z, Z'), I) \ar[r]^{\ \ \ \cong} & \uHom_{kG} (Z' , T_Z I)
}
\]
Note that $\omega$ maps to zero under the left vertical map and by the
commutativity of the diagram, we see that $\gamma$  also maps to zero under
the right vertical map. Thus  $\gamma \rho = 0$, as desired.

$\alpha$ has the left almost split property: For this, it turns out that it
suffices to show that $\uHom_{kG}(T_zI, T_Z I)$ is a local ring; see
\cite{Krause-AR} for details.
\begin{eqnarray*}
\uHom_{kG}(T_zI, T_Z I) & \cong & \Hom_{\Gamma} (\uHom_{kG} (Z, T_Z I), I)\\
                        & \cong & \Hom_{\Gamma} (\Hom_{\Gamma} (\uHom_{kG} (Z, Z), I), I )\\
                        & \cong & \Hom_{\Gamma}(\Hom_{\Gamma}(\Gamma, I) , I)\\
                        & \cong & \Hom_{\Gamma} (I, I)
\end{eqnarray*}
The injective $\Gamma$-module $I$ is indecomposable since
$\Gamma/\text{rad } \Gamma$ is simple, and therefore
$\Hom_{\Gamma}(I, I)$ is a local ring. So we are done.
\end{proof}

\section{Auslander-Reiten duality}
The isomorphism given in eq. \ref{eq:brown} is just another avatar of
Auslander-Reiten duality which we now explain.  Let $M$ and $L$ be finitely
generated $kG$-modules. Then the Auslander-Reiten duality says that there is a
non-degenerate bilinear form (functorial in both variables)
\[ \Phi(-, -) \colon \uHom_{kG}(L, \Omega M) \times \uHom_{kG}(M, L) \lrar k.\]
To explain this pairing, first recall that $\Omega M$ is defined by the short
exact sequence
\[ 0 \lrar \Omega M \lstk{j} P \lstk{q} M \lrar 0,\]
where $P$ is a minimal projective cover of $M$. Given $f \colon L \rar \Omega
M$ and $g \colon M \rar L$, we then get the following diagram.
\[
\xymatrix{ P \ar[d]_{q} \ar[rr]& & P \\
M \ar[r]^{g} & L \ar[r]^f& \Omega M \ar[u]_j }
\]
The top horizontal self map of $P$ is in the image of the norm map $ \eta \colon
\End_k P \rar \End_{kG}P$ which sends $\phi$ to $\sum_{g \in G} g\cdot \phi$.
Now we define $\Phi(f, g) = tr(\theta)$, where $\eta(\theta) = jfgq$.

To see the connection between the Auslander-Reiten sequence and the natural
isomorphism \ref{eq:brown} which was responsible for the existence of
Auslander-Reiten triangles, consider the special case of Auslander-Reiten
duality when $M = k$. This gives a pairing (natural in $L$):
\[ \uHom_{kG}(L, \Omega k) \times \uHom_{kG}(k, L) \lrar k. \]
Or equivalently, a natural isomorphism of functors
\[ \Hom_k (\uHom_{kG}(k, - ), k) \cong \uHom_{kG} (- , \Omega k)\]
This is precisely the isomorphism \ref{eq:brown} when $Z = k$. Since the
representing object is unique, we get $T_k I = \Omega k$. Thus the
Auslander-Reiten triangle ending in $k$ has the form
\[ \Omega^2 k \lrar M \lrar k \lrar \Omega k. \]

In the next three sections we give three different applications of
Auslander-Reiten sequences from our recent work.

\section{Representations of $V_4$-modules}
Consider the Klein four group $V_4 = C_2 \oplus C_2$. We use AR sequences to
prove the following intriguing result which also plays a crucial role in the
proof of the classification of the indecomposable $V_4$ representations.

\begin{thm} \label{thm:Klein} Let $M$ be a projective-free $V_4$-module. Then we have
the following.
\begin{enumerate}
  \item If $l$ is the smallest positive integer such that
  $\Omega^l(k)$ is isomorphic to a submodule of
  $M$, then $\Omega^l(k)$ is a summand of $M$.
  \item Dually, if $l$ is the smallest positive integer such that
  $\Omega^{-l}(k)$ is isomorphic to a quotient module of
  $M$, then $\Omega^{-l}(k)$ is a summand of $M$.
\end{enumerate}
\end{thm}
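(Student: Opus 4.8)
The plan is to prove part (1) and then obtain part (2) by applying (1) after dualizing, since $\Omega^{-l}(k)$ is a quotient of $M$ if and only if $\Omega^l(k)$ is a submodule of $M^*$, and taking duals commutes with $\Omega^{\pm 1}$ up to projectives and preserves projective-freeness and indecomposable summands. So the heart of the matter is (1).

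For (1), suppose $l$ is minimal with an embedding $\iota\colon \Omega^l(k) \hookrightarrow M$; I want to split this inclusion. The key structural input is the Auslander-Reiten sequence ending in $\Omega^l(k)$, which by the existence theorem has the shape
\[ \epsilon\colon\quad 0 \lrar \Omega^{l+2}(k) \lstk{\alpha} E \lstk{\beta} \Omega^l(k) \lrar 0. \]
Now either $\iota$ is a split monomorphism, in which case we are done immediately, or it is not, in which case the left almost split property of $\alpha$ forces $\iota$ to factor as $\iota = \psi\alpha$ for some $\psi\colon E \rar M$. The strategy is to derive a contradiction from this factorization by showing it would produce a copy of $\Omega^{l-1}(k)$ inside $M$, violating minimality of $l$. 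For this I need to understand the middle term $E$ of the AR-sequence over $kV_4$ well enough: over the Klein four group the syzygies $\Omega^n(k)$ are the well-known "zig-zag" modules of dimension $2|n|+1$, and the middle term of the AR-sequence ending in $\Omega^l(k)$ decomposes (for $l$ large enough) as $\Omega^{l+1}(k)\oplus\Omega^{l-1}(k)$ — this is the "one-up and one-down" phenomenon illustrated in the Example for cyclic groups, and it holds for $V_4$ as well. Granting this, $\iota$ factors through $\Omega^{l+1}(k)\oplus\Omega^{l-1}(k)$, and a short argument on dimensions / on which component the factorization can be nonzero shows that $\Omega^l(k)$ must embed into $\Omega^{l-1}(k)$ (it cannot go into $\Omega^{l+1}(k)$ injectively since $\dim\Omega^l(k) > \dim$ of any proper submodule needed, and the relevant map landing in $\Omega^{l-1}(k)$ is forced to be injective on a large enough piece). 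Composing with the inclusion into $M$ would then give $\Omega^l(k) \hookrightarrow \Omega^{l-1}(k) \hookrightarrow$ something inside $M$... wait — more precisely, one shows instead that the factorization yields $\Omega^{l-1}(k)\hookrightarrow M$, contradicting minimality of $l$. Actually the cleanest route: if $\iota$ is not split, factor through $E \cong \Omega^{l+1}(k)\oplus\Omega^{l-1}(k)$; project to each summand; a rank/length count shows the composite $\Omega^l(k)\to\Omega^{l+1}(k)$ cannot be injective, so the composite $\Omega^l(k)\to\Omega^{l-1}(k)$ must be injective (as $\iota$ is), giving $\Omega^l(k)$ as a submodule of $\Omega^{l-1}(k)$; but then by minimality applied inside the projective-free module $\Omega^{l-1}(k)$ — no. Let me instead directly conclude: the existence of such a submodule embedding $\Omega^l(k)\hookrightarrow\Omega^{l-1}(k)$ is impossible for dimension reasons once one checks that no syzygy of $V_4$ embeds into a lower syzygy. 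That contradiction shows $\iota$ was split after all.

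The steps, in order: (i) reduce (2) to (1) by dualizing; (ii) invoke the AR-sequence ending in $\Omega^l(k)$ and its explicit middle term $\Omega^{l+1}(k)\oplus\Omega^{l-1}(k)$ over $kV_4$, with a brief justification from the structure of $\Omega^n(k)$ for the Klein four group; (iii) assume $\iota\colon\Omega^l(k)\hookrightarrow M$ is not split, factor it through the middle term via left almost splitness; (iv) analyze the two components of the factorization using dimensions of the syzygies to show one obtains an embedding into $\Omega^{l-1}(k)$, or directly a copy of $\Omega^{l-1}(k)$ in $M$, contradicting minimality of $l$; (v) conclude $\iota$ splits, so $\Omega^l(k)$ is a summand of $M$.

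The main obstacle I anticipate is step (iv): carefully controlling what the factorization through $\Omega^{l+1}(k)\oplus\Omega^{l-1}(k)$ actually gives. One must rule out the possibility that $\iota$ factors entirely through the "wrong" summand $\Omega^{l+1}(k)$ in a way that produces no contradiction, and one must correctly extract a lower-syzygy submodule of $M$ (or of a syzygy) to contradict minimality. This requires knowing the maps between syzygies of $V_4$ concretely enough — in particular that $\Omega^l(k)$ does not embed in $\Omega^{l+1}(k)$ with split image, and that the only way the diagram closes up forces a genuine $\Omega^{l-1}(k)\hookrightarrow M$. A secondary subtlety is the base case: for small $l$ (namely $l=1$) the middle term of the AR-sequence may not split as $\Omega^{l+1}(k)\oplus\Omega^{l-1}(k)$ in the naive way (since $\Omega^0(k)=k$ is the trivial module), so the argument that $l$ can be decremented must be inspected separately at the bottom, where $M$ having $k$ as a submodule forces $k$ to be a summand by a direct argument using that $k$ is the socle-type simple and $M$ is projective-free.
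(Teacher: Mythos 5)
Your reduction of (2) to (1) by dualizing is exactly what the paper does, and the idea of using the left almost split property against minimality of $l$ is the right instinct. But there are two concrete errors and a structural gap that the plan does not close.

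First, the middle term is wrong. Over $kV_4$ the almost split sequence ending in $\Omega^l k$ (for $l\ne -1$) has middle term $\Omega^{l+1}k\oplus\Omega^{l+1}k$ — two copies of the \emph{same} syzygy — not $\Omega^{l+1}k\oplus\Omega^{l-1}k$. The ``one-up and one-down'' pattern you quote from the Example is the $k[x]/x^p$ (cyclic group) picture; it does not carry over to $V_4$. Related to this, you invoke the left almost split property of the sequence $0\to\Omega^{l+2}k\to E\to\Omega^l k\to 0$, but the map $\alpha$ in that triangle emanates from $\Omega^{l+2}k$, not from $\Omega^l k$; left almost splitness there constrains maps out of $\Omega^{l+2}k$. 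To factor $\iota\colon\Omega^l k\hookrightarrow M$ you need the almost split sequence \emph{beginning} at $\Omega^l k$, namely $0\to\Omega^l k\to \Omega^{l-1}k\oplus\Omega^{l-1}k\to\Omega^{l-2}k\to 0$.

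Second, even with the correct sequence in hand, the plan to get a contradiction in a single step does not work. Factoring $\iota$ as $(f,g)$ through $\Omega^{l-1}k\oplus\Omega^{l-1}k$ and noting that neither $f$ nor $g$ can be injective (by minimality of $l$) is not by itself a contradiction: a map whose two components are both non-injective can perfectly well have injective ``sum''. There is no dimension count that finishes here. The paper's argument is genuinely iterative: it applies the almost split property repeatedly, factoring through $\Omega^{l-2}k\oplus\Omega^{l-2}k$ and so on, descending all the way to the exceptional bottom sequence $0\to\Omega^1 k\to kV_4\oplus k\oplus k\to\Omega^{-1}k\to 0$, and only then closes the loop with a socle argument: the composite lands in $(kV_4)^s\oplus k^t\to M$, and because $M$ is projective-free the invariant subspace $((kV_4)^s)^G$ must map to $0$, so the overall composite cannot be injective. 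Your plan is missing both the descent and this final socle/projective-freeness step, which is where the hypothesis that $M$ is projective-free is actually used.

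So: correct skeleton (dualize, AR-sequences, minimality), but the $V_4$-specific data is wrong, the almost-split sequence is applied at the wrong end, and the one-shot contradiction needs to be replaced by an inductive descent terminating in the socle argument.
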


\begin{proof}

First note that the second part of this lemma follows by dualising the first
part; here we also use the fact that $(\Omega^l\,k)^*\cong \Omega^{-l}\,k$. So
it is enough to prove the first part.

The AR sequences in the category of $kV_4$-modules are of the form (see
\cite[Appendix, p 180]{ben-trends}):
\[0 \rar \Omega^{l+2}\,k \rar \Omega^{l+1}\, k \oplus \Omega^{l+1}\, k \rar \Omega^l\,k \rar 0   \hspace{9 mm}   l \ne -1\]
\[ 0 \rar \Omega^1\,k \rar kV_4 \oplus k \oplus k \rar \Omega^{-1}\,k \rar 0  \hspace{20 mm}\]

To prove the first part of the above lemma, let $l$ be the smallest positive
integer such that $\Omega^l\,k$ embeds in a projective-free $V_4$-module $M$.
If this embedding does not split, then by the property of an almost split
sequence, it should factor through $\Omega^{l-1}\,k \oplus \Omega^{l-1}\,k$ as
shown in the diagram below.
\[
\xymatrix{ 0 \ar[r] & \Omega^{l}\,k \ar[r] \ar@{^{(}->}[d] & \Omega^{l-1}\,k
\oplus \Omega^{l-1}\,k
\ar[r] \ar@{..>}[dl]^{f \oplus g} \ar[r]& \Omega^{l-2}\,k \ar[r] & 0 \\
& M & & & & }
\]
Now if either $f$ or $g$ is injective, that would contradict the minimality of
$l$, so they cannot be injective. So both $f$ and $g$ should factor through
$\Omega^{l-2}\,k \oplus \Omega^{l-2}\,k$ as shown in the diagrams below.
\[
\xymatrix{ 0 \ar[r] & \Omega^{l-1}\,k \ar[r] \ar[d]_f & \Omega^{l-2}\,k \oplus
\Omega^{l-2}\,k
\ar[r] \ar@{..>}[dl]^{(f_1 \oplus f_2)} \ar[r] & \Omega^{l-3}\,k \ar[r] & 0 \\
& M & & & & }
\]
\[
\xymatrix{ 0 \ar[r] & \Omega^{l-1}\,k \ar[r] \ar[d]_g & \Omega^{l-2}\,k \oplus
\Omega^{l-2}\,k
\ar[r] \ar@{..>}[dl]^{(g_1 \oplus g_2)} \ar[r] & \Omega^{l-3}\,k \ar[r] & 0 \\
& M & & & & }
\]

 Proceeding in this way we can assemble all the lifts obtained
using the almost split sequences into one diagram as shown below.
\[
\xymatrix{
\Omega^l\, k \ar@{^{(}->}[rrrrrr] \ar@{^{(}->}[d] & &&&&& M \\
\Omega^{l-1}\,k \oplus \Omega^{l-1}\,k \ar@{..>}[urrrrrr]  \ar@{^{(}->}[d]& &&&&& \\
(\Omega^{l-2}\,k \oplus \Omega^{l-2}\,k)\oplus(\Omega^{l-2}\,k \oplus
\Omega^{l-2}\,k) \ar@{..>}[uurrrrrr] \ar@{^{(}->}[d] \\
\vdots \ar@{^{(}->}[d] \\
(\Omega^1\, k \oplus \Omega^1\,k)\oplus \cdots \oplus(\Omega^1\, k \oplus
\Omega^1\,k) \ar@{..>}[uuuurrrrrr]  \ar@{^{(}->}[dd]\\
\\
 (kV_4 \oplus k \oplus k) \oplus \cdots \oplus (kV_4 \oplus k \oplus k)
\ar@{..>}[uuuuuurrrrrr]}
\]
So it suffices to show  that for a  projective-free $M$ there cannot exist a
factorisation of the form
\[
\xymatrix{
 \Omega^l\, k \ar@{^{(}->}[r] \ar@{^{(}->}[d] & M \\
 (kV_4)^s \oplus k^t \ar@{.>}[ur]_{\phi} & }
\]
where $l$ is a positive integer. It is not hard to see that the invariant
submodule $(\Omega^l\,k)^G$ of $\Omega^l\,k$ maps into $((kV_4)^s)^G$. We will
arrive at a contradiction by showing $((kV_4)^s)^G$ maps to zero under the map
$\phi$. Since $((kV_4)^s)^G \cong ((kV_4)^G)^s$ it is enough to show that
$\phi$ maps each $(kV_4)^G$ to zero. $(kV_4)^G$ is a one-dimensional subspace,
generated by an element which we denote as $v$. If $v$ maps to a non-zero
element, then it is easy to see that the restriction of $\phi$ on the
corresponding copy of $kV_4$ is injective, but $M$ is projective-free, so this
is impossible. In other words $\phi(v) = 0$ and that completes the proof of
the lemma.

\end{proof}

\section{Tate cohomology}

Recall that the Tate cohomology of $G$ with coefficients in a module $M$ is
given by $\Tate^*(G,M) = \uHom_{kG}(\Omega^*k, M)$. In our recent joint work
with Jon Carlson we focused on two fundamental questions (\cite{CarCheMin} and
\cite{CarCheMin2}) about Tate cohomology which we discuss in the next two
subsections. As we shall see, Auslander-Reiten sequences play an important
role in answering both of these questions.

\subsection{Modules with finitely generated Tate cohomology}

Let $M$ be a finitely generated $kG$-module. A well-known result of
Evens-Venkov states that the ordinary cohomology $H^*(G, M)$ is finitely
generated as a module over $H^*(G, k)$. So it is a very natural question to
investigate whether the same is true for Tate cohomology. That is, whether
$\Tate^*(G, M)$ is finitely generated as a module over $\Tate^*(G, k)$. As
shown in \cite{CarCheMin} Tate cohomology is seldom finitely generated. However, there is an
interesting family of modules arising from AR sequences whose Tate cohomology
is finitely generated.

To start, let $N$ be a finitely generated indecomposable non-projective
$kG$-module that is not isomorphic to $\Omega^i k$ for any $i$. Consider
 the Auslander-Reiten sequence
\[ 0 \lrar \Omega^2 N \lrar M \lrar N \lrar 0\]
ending in $N$. Assume that  $N$ has finitely generated Tate cohomology, then
we shall see that the middle term $M$ also has finitely generated Tate
cohomology.

Consider the connecting map $\phi \colon N \lrar \Omega N$ in the exact triangle
\[\Omega^2 N \lrar M \lrar N \lstk{\phi} \Omega N \]
corresponding to the above Auslander-Reiten sequence. We first argue that $\phi$
induces the zero map in Tate cohomology. To this end, let $f \colon \Omega^i k
\rar N$ be an arbitrary map. We want to show that the composite $\Omega^i k
\stk{f} N \stk{\phi} \Omega N$ is zero in the stable category. Now observe
that the assumption on $N$ implies that $f$ is not a split retraction,
therefore the map $f$ factors through the middle term $M$. Since the
composition of any two successive maps in an exact triangle is zero, it
follows that $\phi f = 0$.

Since the boundary map $\phi$ induces the zero map in Tate cohomology, the resulting
long exact sequence in Tate cohomology breaks into short exact sequences:
\[ 0 \lrar \HHHH^*(G, \Omega^2 N) \lrar \HHHH^*(G,  M) \lrar \HHHH^*(G, N) \lrar 0.\]
It is now clear that if $N$ has finitely
generated Tate cohomology, then so does $M$.

In \cite{CarCheMin} we have also shown that the middle term of the AR sequence ending in $k$
has finitely generated Tate cohomology.

These results imply that if we have a sequence of finitely generated
indecomposable non-projective $kG$-modules
\[ N_1,N_2,\cdots,N_s, s \in \mathbb{N} \]
such that $\HHHH^*(G,N_1)$ is finitely generated as a module
over $\HHHH^*(G,k)$ and $N_{i+1}$ is a summand of a middle
term of the Auslander-Reiten sequence which ends in
$N_i, 1 \le i < s$, then all modules $\HHHH^*(G,N_i)$
are finitely generated over $\HHHH^*(G,k)$.

\subsection{Counterexamples to Freyd's generating hypothesis}
Motivated by the celebrated generating hypothesis (GH) of Peter Freyd in homotopy theory \cite{freydGH} and its
analogue in the derived category of a commutative ring \cite{GH-D(R), keir}, we have formulated in \cite{CarCheMin2}
the analogue of Freyd's GH in the stable module category $\stmod(kG)$ of a finite
group $G$, where $k$ is a field of characteristic $p$. This is  the statement that
the Tate cohomology functor detects trivial maps (maps that factor through a projective module) in the thick
subcategory generated by $k$. We studied the GH and related questions in special cases in a series of papers:
\cite{CCM2, CCM3, CCM, CCM4}.  We have finally settled the GH for the stable module category in joint work
with Jon Carlson. The main result of \cite{CarCheMin2} is:

\begin{thm} \cite{CarCheMin2}
The generating hypothesis  holds for $kG$ if and only if the Sylow $p$-subgroup
of $G$ is either $C_2$ or $C_3$.
\end{thm}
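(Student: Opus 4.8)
The plan is to reduce the assertion to the case where $G$ is a $p$-group and then to use Auslander--Reiten sequences to decide exactly which $p$-groups satisfy the generating hypothesis (GH). The first step is a reduction to a Sylow $p$-subgroup $P$: since $[G:P]$ is invertible in $k$, the trivial module $k$ is a direct summand of $k[G/P] = \Ind_P^G \Res_P^G k$, and, combined with Green correspondence and the $(\Ind_P^G, \Res_P^G)$-adjunctions on stable module categories, this yields that GH holds for $kG$ if and only if GH holds for $kP$. For the remainder, let $P$ be a $p$-group; then $k$ is its only simple module, every finitely generated $kP$-module has a composition series with all factors $k$, and hence $\thick_{kP}(k) = \stmod(kP)$. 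So GH for $kP$ is the statement that $\Tate^{*}(P,-)$ detects trivial maps on all of $\stmod(kP)$.

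I would then prove the clean dichotomy: for a $p$-group $P$, GH holds for $kP$ if and only if every indecomposable non-projective $kP$-module is isomorphic to $\Omega^{i}k$ for some $i \in \bZ$. Suppose first that some indecomposable non-projective $N$ satisfies $N \not\cong \Omega^{i}k$ for all $i$. Let $0 \to \Omega^{2}N \to M \to N \to 0$ be the Auslander--Reiten sequence ending in $N$ and let $\gamma \colon N \to \Omega N$ be the associated connecting map, giving an exact triangle $\Omega^{2}N \to M \to N \stk{\gamma} \Omega N$. The defining property of an Auslander--Reiten triangle gives $\gamma \ne 0$. On the other hand, the computation carried out above for modules with finitely generated Tate cohomology applies verbatim: any $f \colon \Omega^{i}k \to N$ is not a split retraction (as $N$ is indecomposable and $N \not\cong \Omega^{i}k$), hence factors through $M$, hence $\gamma f = 0$; so $\gamma$ induces the zero map $\Tate^{*}(P,N) \to \Tate^{*}(P,\Omega N)$. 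Thus $\gamma$ is a non-zero map in $\stmod(kP) = \thick_{kP}(k)$ that is invisible to Tate cohomology, so GH fails. Conversely, if every indecomposable non-projective module is some $\Omega^{i}k$, then every object of $\stmod(kP)$ is a finite direct sum of suspensions of $k$, and a map between two such objects is a matrix whose entries are elements of the graded ring $\Tate^{*}(P,k)$ (the $(i,j)$-entry sitting in degree $i-j$, via $\uHom_{kP}(\Omega^{i}k,\Omega^{j}k)\cong\Tate^{\,i-j}(P,k)$); applying $\Tate^{*}(P,-)$ turns such an entry $x$ into multiplication by $x$ on $\Tate^{*}(P,k)$, which is non-zero as soon as $x \ne 0$ because $x \cdot 1 = x$. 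Hence $\Tate^{*}(P,-)$ detects trivial maps and GH holds.

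It remains to say which $p$-groups $P$ have the property that all their indecomposable non-projective modules are suspensions of $k$. For $P = C_{2}$ the algebra $k[x]/x^{2}$ has $k$ as its only indecomposable non-projective module (with $\Omega k \cong k$), and for $P = C_{3}$ the algebra $k[x]/x^{3}$ has exactly the two indecomposable non-projectives $k$ and $\Omega k = k[x]/x^{2}$ (with $\Omega^{2}k \cong k$); in both cases the property holds, so by the previous paragraph GH holds for $C_{2}$ and $C_{3}$. (Concretely, $\Tate^{*}(C_{2},k) \cong k[t^{\pm 1}]$ and $\Tate^{*}(C_{3},k) \cong k[u,v^{\pm 1}]/(u^{2})$, neither of which has a non-zero element annihilating the whole ring.) For every other $p$-group the property fails: if $P$ is cyclic of order $q = p^{n} \ge 4$, then $k[x]/x^{q}$ has the $q - 1 \ge 3$ indecomposable non-projectives $k[x]/x^{i}$, $1 \le i \le q-1$, of which only $k = k[x]/x$ and $\Omega k = k[x]/x^{q-1}$ are suspensions of $k$, so e.g.\ $k[x]/x^{2}$ is not; and if $P$ is non-cyclic, then $kP$ has infinite representation type, hence infinitely many indecomposable modules, whereas the suspensions of $k$ form a single $\Omega$-orbit and so cannot exhaust them. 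Combined with the reduction to $P$, this proves the theorem.

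The step I expect to be the main obstacle is the reduction from $kG$ to $kP$, i.e.\ proving that GH for $kG$ is equivalent to GH for its Sylow $p$-subgroup. The delicate point is that when $G$ is not a $p$-group, $\thick_{kG}(k)$ is only the thick subcategory generated by $k$ inside the principal block, not the whole stable category, so one must carefully track how $\Ind_P^G$, $\Res_P^G$, Green correspondence, and transfer interact with this thick subcategory and with $\Tate^{*}$: one direction requires transporting a non-zero, Tate-invisible map over $kP$ to one over $kG$ that still lies in $\thick_{kG}(k)$, while the other requires that Tate-faithfulness over $kP$ forces it over $kG$. A secondary, purely representation-theoretic input, used in the non-cyclic case above, is that the group algebra of a non-cyclic $p$-group has infinite representation type.
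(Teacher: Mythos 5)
Your core idea — use the Auslander--Reiten sequence ending in an indecomposable non-projective $N$ with $N \not\cong \Omega^{i}k$ for all $i$, and observe that the resulting connecting map $N \to \Omega N$ is a non-zero ghost — is exactly the strategy the paper describes in \S 7.2, and your writeup of that step, together with the converse direction for $p$-groups (matrices over $\Tate^{*}(P,k)$), is correct. So in the heart of the argument you and the paper agree.

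The main gap is the one you flagged yourself: the passage from $p$-groups to general $G$. It is not enough to invoke the usual retraction of $k$ off $k[G/P]$, because for non-$p$-groups $\thick_{kG}(k)$ is a proper subcategory of $\stmod^{c}(kG)$ and does not interact cleanly with $\Ind_{P}^{G}$ and $\Res^{G}_{P}$. In particular the direction ``GH for $kP$ implies GH for $kG$'' is not a formal diagram chase: if $\phi$ is a ghost in $\thick_{kG}(k)$, then $\Res^{G}_{P}\phi$ need not be a ghost over $kP$, since $\Tate^{*}(P,\Res M) \cong \uHom_{kG}(\Ind_{P}^{G}\Omega^{*}k, M)$ is strictly larger than $\Tate^{*}(G,M)$; the transfer argument lets you conclude $\phi = 0$ from $\Res\phi = 0$, but gives you no handle on whether $\Res\phi$ is a ghost. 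Conversely, given a $kP$-ghost, inducing it up does not obviously land in $\thick_{kG}(k)$. The content of \cite{CarCheMin2} is precisely to locate, when $|P|\ge 4$, an indecomposable non-projective module inside $\thick_{kG}(k)$ that is not a suspension of $k$ — which is the input your AR-sequence argument needs over $kG$ directly — rather than to prove the clean equivalence ``GH for $kG \Leftrightarrow$ GH for $kP$'' that you posit. So I would not present the Sylow reduction as a black box one expects to hold; it is the theorem.

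A second, smaller gap: for non-cyclic $P$ you argue that infinite representation type means the indecomposables cannot all be suspensions of $k$ because the $\Omega$-orbit of $k$ ``is a single orbit.'' But that orbit is itself countably infinite when cohomology is non-periodic, so mere infinitude of the set of indecomposables does not preclude the orbit from exhausting it. You need a concrete obstruction. The standard one is dimensions: over a $p$-group $P$, projectives are free, so $\dim_{k}\Omega^{i}k \equiv (-1)^{i} \pmod{|P|}$. Inflate the two-dimensional indecomposable $k[x]/x^{2}$ along any surjection $P \twoheadrightarrow C_{p}$; the result is an indecomposable non-projective $kP$-module of dimension $2$, and $2 \not\equiv \pm 1 \pmod{|P|}$ once $|P| \ge 4$, so it cannot be any $\Omega^{i}k$. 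This also subsumes your separate cyclic case and gives a uniform treatment for all $p$-groups of order at least $4$.
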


Maps in the thick subcategory generated by $k$ that induce the zero map in
Tate cohomology are called ghosts.  Thus non-trivial ghosts give
counterexamples to the GH. After much research on this, we were led to a big
revelation when we discovered that a good source of non-trivial ghosts come
from Auslander-Reiten sequences.

To explain this in more detail, consider an AR sequence
\[ 0 \lrar \Omega^2 N \lrar B \lrar N \lrar 0 \]
ending in $N$. This short exact sequence represents a exact triangle
\[  \Omega^2 N \lrar B \lrar N \lstk{\phi}  \Omega N \]
in the stable category. We will show that
the map $\phi \colon N \lrar \Omega N$
is a non-trivial ghost. First of all, AR sequences are, by definition, non-split short
exact sequences, and therefore the
boundary map $\phi$ in the above triangle
must be a non-trivial map in the stable category.
The next thing to be shown is that the map
$\phi \colon N \lrar \Omega N$ induces the
zero map on the functors
$\uHom_{kG}(\Omega^i k , -)  \cong \hExt^i(k, -)$
for all $i$. Arguing as in section~7.1, consider any map
$f  \colon \Omega^i k  \lrar N$. We have to show that the
composite
\[ \Omega^i k  \lstk{f} N \lstk{\phi} \Omega N \]
is trivial in the stable category. Consider the following diagram
\[
\xymatrix{
 & & \Omega^i k \ar[d]^f \ar@{.>}[dl]  & \\
\Omega^2N \ar[r] & B \ar[r] & N \ar[r]^\phi & \Omega N
}
\]
where the bottom row is our exact triangle. The map $f \colon  \Omega^ik \lrar
N$ cannot be a split epimorphism if we choose $N$ to be an indecomposable
non-projective module in the thick subcategory generated by $k$ that is not
isomorphic to $\Omega^i k$ for any $i$. Then by the defining property of an AR
sequence, the map $f$ factors through the middle term $B$ as shown in the
above diagram. Since the composite of any two successive maps in a exact
triangle is zero, the composite $\phi \circ f$ is also zero by commutativity.
Thus the moral of the story is that in order to disprove the GH for $kG$, we
just have to find an indecomposable non-projective module in the thick
subcategory generated by $k$ that is not isomorphic to $\Omega^i k$ for any
$i$. This was the strategy we used to disprove the GH for $kG$ when the Sylow
$p$-subgroup has order at least $4$.

\section{Galois theoretic connections}

In this section we provide the motivation for some of the
arithmetic objects which will appear in the later sections when we
study Auslander-Reiten sequences in the context of Galois theory.
To set the stage, we begin with our notation.

Let $F$ be a field of characteristic not equal to $2$ and let $F_{sep}$
denote the separable closure of $F$. We shall introduce several subextensions
of $F_{sep}$.
\begin{itemize}
\item $F^{(2)}$ = compositum of all quadratic extensions of $F$.
\item  $F^{(3)}$ = compositum of all quadratic extensions of $F^{(2)}$, which are Galois over $F$.
\item $F^{\{3\}}$ = compositum of all quadratic extensions of $F^{(2)}$.
\item $F_q$ = compositum of all Galois extensions $K/F$ such that $[K: F] = 2^n$, for some positive integer $n$.
\end{itemize}
All of these subextensions are Galois and they fit in a tower
\[ F \subset F^{(2)} \subset F^{(3)} \subset F^{\{3\}} \subset F_q \subset F_{sep} .\]
We denote their Galois groups (over $F$) as
\[G_F \lrar G_q \lrar G_F^{\{3\}} \lrar G_F^{[3]} (=\G_F) \lrar G_F^{[2]} (= E) \lrar 1.\]

Observe that $G_F^{[2]}$ is just $\underset{i \in I}\prod C_2$, where $I$ is
the dimension of $F^*/{F^*}^2$ over $\ftwo$. $F^*$ denotes  the multiplicative
subgroup of $F$. $G_F$ is the absolute Galois group of $F$. The absolute
Galois group of fields are in general rather mysterious objects of great
interest. (See \cite{BLMS} for restrictions on the possible structure of
absolute Galois groups.) Although the quotients $G_q$ are much simpler we are
far from understanding their structure in general. $F^{\{3\}}$ and its Galois
group over $F$ are considerably much simpler and yet they already contain
substantial arithmetic information of the absolute Galois group.

To illustrate this point, consider $WF$ the Witt ring of quadratic forms; see
\cite{Lam} for the definition. Then we have the following theorem.

\begin{thm}\cite{minac-spira-annals}
Let $F$ and $L$ be two fields of characteristic not $2$. Then $WF \cong WL$ (as rings) implies
that $\G_F \cong \G_L$ as pro-$2$-groups. Further if we assume additionally in the case when
each element of $F$ is a sum of two squares that $\sqrt{-1} \in F$ if and only if $\sqrt{-1} \in L$,
then $\G_F \cong \G_L$ implies $WF \cong WL$.
\end{thm}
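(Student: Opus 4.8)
The plan is to relate both sides to a common combinatorial invariant attached to the quadratic form structure, namely the mod-$2$ Galois cohomology ring together with its distinguished degree-one classes, and then to lift an isomorphism at that level to the group $\G_F$. First I would recall that by the Milnor conjecture (Voevodsky–Orlov–Vishik–Voevodsky) — or, in the range relevant here, by classical results of Merkurjev and Arason — the Witt ring $WF$, as a filtered ring via the fundamental ideal $I F$, determines and is determined by the graded ring $\bigoplus_n I^nF/I^{n+1}F \cong \bigoplus_n H^n(G_F,\ftwo)$. Moreover the first two graded pieces, $F^*/{F^*}^2 \cong H^1(G_F,\ftwo)$ and $I^2F/I^3F \cong H^2(G_F,\ftwo)$, together with the cup product $H^1 \times H^1 \to H^2$, already carry the essential information: a ring isomorphism $WF \cong WL$ induces an isomorphism of pointed quadratic data $(F^*/{F^*}^2, \langle\langle\ \rangle\rangle\text{-relations})$ with the analogous data for $L$, respecting the class of $-1$ when that class is detected ring-theoretically.

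Next I would identify $\G_F = G_F^{[3]}$ with a group built functorially out of exactly this data. The key point, which goes back to the description of the $W$-group (the "small Galois group" of Mináč–Spira), is that $\G_F$ is a central extension
\[ 1 \lrar H^2(G_F,\ftwo)^{\vee} \lrar \G_F \lrar H^1(G_F,\ftwo)^{\vee} \lrar 1 \]
whose extension class is dual to the cup product, twisted by a quadratic refinement that records the Arf/Bockstein behavior — concretely, $\G_F$ is the universal group whose Frattini quotient is $E = G_F^{[2]} = \prod_I C_2$ and whose defining relations encode precisely the vanishing of $2$-fold Pfister forms, i.e.\ the relations $(a)\cup(b)=0$ in $H^2$ together with $(a)\cup(a) = (a)\cup(-1)$. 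Thus $\G_F$ is recovered from the pair $\bigl(H^1(G_F,\ftwo),\ \cup\bigr)$ plus the element $(-1) \in H^1$, by an explicit functorial recipe. Feeding in the isomorphism of that data coming from $WF \cong WL$ then yields $\G_F \cong \G_L$, proving the first implication. (When every element of $F$ is a sum of two squares the element $(-1)$ may fail to be visible from the abstract ring $WF$ alone, which is exactly why the extra hypothesis $\sqrt{-1}\in F \Lrar \sqrt{-1}\in L$ is imposed in that case for the converse.)

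For the converse, assuming $\G_F \cong \G_L$, I would run the recipe backwards: the Frattini quotient of $\G_F$ recovers $H^1(G_F,\ftwo)$ as an $\ftwo$-vector space, the commutator-and-squaring map $\G_F/\Phi \times \G_F/\Phi \to \Phi(\G_F)$ recovers the cup product pairing $H^1\times H^1 \to H^2$ (identifying its image with the relevant quotient of $H^2$), and hence recovers the graded ring in degrees $\le 2$; by Merkurjev's theorem this quotient of the cohomology ring, together with the class of $-1$ (supplied either intrinsically or by the added hypothesis), determines $WF$ as a ring. The main obstacle — and the step I would expect to demand real care — is this last passage: showing that the ring structure of $WF$ is genuinely recovered from the low-degree cohomological data plus the position of $(-1)$, including pinning down exactly when $(-1)$ is or is not determined by $\G_F$ alone, and verifying that the sum-of-two-squares case is the only obstruction. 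The group-theoretic side and the Milnor-conjecture translation are essentially formal once set up; it is the careful bookkeeping of the distinguished element $(-1)$ and the non-degeneracy of the induced pairing that carries the weight of the theorem.
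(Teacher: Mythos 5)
The paper cites this theorem from Min\'a\v{c}--Spira \cite{minac-spira-annals} without giving a proof, so there is no internal argument to measure your sketch against; I can only assess it on its own terms. Your overall plan --- translate both $WF$ and $\G_F$ into the degree-$\le 2$ mod-$2$ data $(H^1(G_F,\ftwo),\ \cup,\ (-1))$, chain the two dictionaries, invoke Merkurjev in degree $2$, and flag $(-1)$ as the one invariant the abstract ring may fail to locate (hence the sum-of-two-squares caveat) --- is indeed the shape of the Min\'a\v{c}--Spira argument.

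But the central extension you assert,
\[ 1 \lrar H^2(G_F,\ftwo)^{\vee} \lrar \G_F \lrar H^1(G_F,\ftwo)^{\vee} \lrar 1, \]
is false, and this is not a cosmetic slip: it is precisely the step that was supposed to make $\G_F$ ``functorial in the cohomological data.'' Test it at the extremes. If $G_q(F)$ is free pro-$2$ of rank $n\ge 2$, then $H^2(G_F,\ftwo)=0$ and your sequence would force $\G_F\cong (C_2)^n$; in fact $\G_F$ is then the free W-group on $n$ generators and $\dim_{\ftwo}\Phi(\G_F)=\binom{n+1}{2}$. At the other extreme $F=\mathbb{R}$ gives $\G_F\cong C_2$, hence $\Phi(\G_F)=1$, while $H^2(G_{\mathbb{R}},\ftwo)\cong\ftwo$. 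The dependence actually runs in the opposite direction: by Kummer theory and Merkurjev, $H^2(G_F,\ftwo)$ records which of the universal relations $[\sigma_a,\sigma_b]=1$ and $\sigma_a^2=1$ \emph{hold}, so $\G_F$ is a \emph{quotient} of the universal W-group on $\dim_{\ftwo}F^*/F^{*2}$ generators, with kernel inside the Frattini subgroup dual to those imposed relations --- not a central extension with kernel $H^2(G_F,\ftwo)^{\vee}$. Your second, more informal description of $\G_F$ (``the universal group whose relations encode $(a)\cup(b)=0$'') is the correct one, but it is inconsistent with the displayed extension and you need to commit to it. Finally, two steps you pass over carry real weight in the actual proof: (i) an abstract ring isomorphism $WF\cong WL$ sends one-dimensional forms to one-dimensional forms (Harrison, Cordes), which is what produces the isomorphism $F^*/F^{*2}\to L^*/L^{*2}$ compatible with the quaternionic pairing; and (ii) that this pairing together with the class of $(-1)$ determines $WF$ as a ring is Marshall's theory of quaternionic structures / abstract Witt rings, not a formal consequence of Merkurjev's theorem.
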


Thus we see that $\G_F$ essentially controls the Witt ring $WF$ and in fact, $\G_F$ can be viewed as
a Galois theoretic analogue of $WF$. In particular, $\G_F$ detects orderings of fields. (Recall that
$P$ is an ordering of $F$ if $P$ is an additively closed subgroup of index $2$ in  $F^*$ .)
More precisely, we have:

\begin{thm} \cite{minac-spira-mathz}
There is a 1-1 correspondence between the orderings of a field $F$ and cosets
$\{ \sigma \Phi(\G_F) \, |\, \sigma \, \in \, \G_F \backslash \Phi(\G_F)  \text{and }\sigma^2 = 1\}$. Here $\Phi(\G_F)$
is the Frattini subgroup of $\G_F$, which is just the closed subgroup of $\G_F$ generated
by all squares in $\G_F$. The correspondence is as follows:
\[ \sigma \Phi(\G_F) \lrar P_{\sigma} = \{ f \in F^* \, | \, \sigma(\sqrt{f}) = \sqrt{f} \}.\]
\end{thm}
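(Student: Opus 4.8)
The plan is to exhibit the correspondence explicitly and check it is well-defined, injective, and surjective. Write $E = G_F^{[2]} = \prod_{i\in I} C_2$ and recall that $\G_F = G_F^{[3]}$ surjects onto $E$ with kernel a central elementary abelian $2$-group, so that $\G_F/\Phi(\G_F) \cong E$ via the identification of $E$ with $F^*/{F^*}^2$ coming from Kummer theory. Under this identification an element $\sigma \in \G_F$ with $\sigma^2 = 1$ acts on $\sqrt{f}$ by $\sigma(\sqrt f) = \pm\sqrt f$, and the set $P_\sigma = \{f \in F^* \mid \sigma(\sqrt f) = \sqrt f\}$ is automatically a subgroup of $F^*$ of index at most $2$ (it is the kernel of the homomorphism $F^*/{F^*}^2 \to \{\pm1\}$ recording the sign), and of index exactly $2$ precisely when $\sigma \notin \Phi(\G_F)$. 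So the content of the theorem is: \emph{such a $P_\sigma$ is additively closed if and only if the involution $\sigma$ lifts coherently through the next stage of the tower, i.e. is realized by an honest element of $\G_F$ of order $2$ outside the Frattini subgroup.}

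First I would establish the \textbf{forward direction}: given an ordering $P$, produce $\sigma_P \in \G_F$. Pick a real closure $R$ of $F$ compatible with $P$ inside $F_{sep}$; the nontrivial element $\tau$ of $\Gal(R(\sqrt{-1})/R)$ generates a Galois group, and since $F^{(3)}/F$ is Galois one restricts $\tau$ (or rather a suitable conjugate/lift) to $\G_F = \Gal(F^{(3)}/F)$. One checks $\sigma_P^2 = 1$ because $\tau$ has order $2$ and $F^{(3)} \subset R(\sqrt{-1})$ is forced by the fact that every element of $F^*$ is a square or a negative square in $R$, so $R$ already contains $F^{(2)}$ and $R(\sqrt{-1})$ contains $F^{(3)}$. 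Then $P_{\sigma_P} = \{f \mid \sqrt f \in R\} = \{f \mid f >_P 0\} = P$ by construction, and $\sigma_P \notin \Phi(\G_F)$ since $P$ has index $2$.

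For the \textbf{reverse direction} and \textbf{injectivity}, suppose $\sigma \in \G_F\setminus\Phi(\G_F)$ with $\sigma^2 = 1$; I need $P_\sigma$ additively closed. The key input is a cohomological or quadratic-form characterization of orderings: $P$ is an ordering iff for all $a, b \in P$ the quaternion form $\langle\langle a, b\rangle\rangle$ behaves correctly, equivalently iff certain length-$2$ relations in $\G_F$ vanish — this is exactly the kind of statement that the structure of $\G_F$ as the Galois group of $F^{(3)}$ encodes, and is where the Merkurjev--Suslin / Arason--Elman--Jacob machinery (or the explicit presentation of $\G_F$ in \cite{minac-spira-mathz}) enters. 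Concretely, $\sigma^2=1$ in $\G_F$ says that for every quadratic $F^{(2)}/F$ fixed appropriately by $\sigma$, the further quadratic extension is still $\sigma$-split, which translates via the cocycle description into: $a, b \in P_\sigma \Rightarrow$ the element $1 + $ (something) again lies in $P_\sigma$, i.e. $a + b \in a\cdot P_\sigma = P_\sigma$ (after normalizing $1 \in P_\sigma$). Two distinct cosets $\sigma\Phi(\G_F) \neq \sigma'\Phi(\G_F)$ give homomorphisms $F^*/{F^*}^2 \to \{\pm 1\}$ that differ, hence $P_\sigma \neq P_{\sigma'}$, giving injectivity; surjectivity onto all orderings is the forward direction above.

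\textbf{The main obstacle} is the reverse direction — proving that $\sigma^2 = 1$ forces $P_\sigma$ to be \emph{additively} closed. Multiplicative closure and index $2$ are formal; additive closure is a genuinely arithmetic constraint, and bridging it to the purely group-theoretic condition $\sigma^2=1$ in $\G_F$ is precisely the reason one must work with $F^{(3)}$ rather than the smaller $F^{(2)}$: the extension $F^{(3)}/F^{(2)}$ records the degree-$2$ part of the Witt ring (the relations among quaternion algebras), and it is these relations that encode additivity. I expect this step to rely essentially on the presentation of $\G_F$ and on the theorem of Min\'a\v{c}--Spira already cited relating $\G_F$ to $WF$, rather than on anything elementary.
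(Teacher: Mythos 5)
The paper states this result without proof, citing \cite{minac-spira-mathz}, so there is no in-paper argument to compare against; I can only assess your proposal on its own merits. The formal scaffolding is correct: well-definedness on cosets (since $\Phi(\G_F)=\Gal(F^{(3)}/F^{(2)})$ acts trivially on each $\sqrt f$), the identification of $P_\sigma$ as the kernel of the character $f\mapsto\sigma(\sqrt f)/\sqrt f$ of $F^*/(F^*)^2$ and hence a subgroup of index exactly $2$ when $\sigma\notin\Phi(\G_F)$, and injectivity are all fine. Your surjectivity construction is essentially right, with one slip: a real closure $R$ of $(F,P)$ does \emph{not} contain $F^{(2)}$ (it misses $\sqrt f$ for $f<_P 0$); the correct reason that $F^{(3)}\subset R(\sqrt{-1})$ is simply that $R(\sqrt{-1})$ is algebraically closed. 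Restricting $\tau\in\Gal(R(\sqrt{-1})/R)$ to the Galois subfield $F^{(3)}$ then gives $\sigma_P\in\G_F$ with $\sigma_P^2=1$, $\sigma_P\notin\Phi(\G_F)$, and $P_{\sigma_P}=P$.

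The genuine gap is the one you flag yourself: you do not prove that for an arbitrary involution $\sigma\in\G_F\setminus\Phi(\G_F)$ the index-$2$ subgroup $P_\sigma$ is additively closed (nor that $-1\notin P_\sigma$). This is not formal; it is the arithmetic heart of the theorem. As a sanity check that something substantive must happen, take $F=\mathbb{Q}_5$: there $-1$ is already a square, so $-1\in P_\sigma$ for \emph{every} $\sigma$ and no $P_\sigma$ can be an ordering, so consistency with the theorem forces every involution of $\G_{\mathbb{Q}_5}\cong C_4\times C_4$ to lie inside $\Phi$ --- which is true, but is precisely the kind of non-obvious group-theoretic constraint the theorem packages. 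Your sketch that ``$\sigma^2=1$ translates via the cocycle description into $a+b\in P_\sigma$'' names no concrete embedding problem, cocycle, or identity; it does not explain how the relation $\sigma^2=1$ in $\G_F$ forces the quaternion algebras $(a,b)$ with $a,b\in P_\sigma$ to split (equivalently, the two-fold Pfister forms $\langle\langle -a,-b\rangle\rangle$ to be isotropic), nor how that yields $a+b\in P_\sigma$. That translation --- carried out in \cite{minac-spira-mathz} via the explicit generators-and-relations description of $\G_F$ and its link to $WF$ --- is the missing step; without it the proposal is a sound plan but not a proof.
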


This theorem was generalised considerably for detecting additive properties of multiplicative
subgroups of $F^*$ in \cite{Louis-Minac-Smith}. In this paper (see  \cite[Section 8]{Louis-Minac-Smith})
it was shown that $\G_F$ can be used also for detecting valuations on $F$.

Also in \cite[Corollary 3.9]{adem-dikran-minac} it is shown that $\G_F \cong
\G_L$ if and only if $k_*(F) \cong k_*(L)$. Here $k_*(A)$ denotes the Milnor
$K$-theory (mod 2) of a field $A$. In particular, in  \cite[Theorem
3.14]{adem-dikran-minac} it is shown that if $R$ is the subring of $H^*(\G_F,
\ftwo)$ generated by one dimensional classes, then $R$ is isomorphic to the
Galois cohomology $ H^*(G_F, \ftwo)$ of $F$. Thus we see that $\G_F$ also
controls Galois cohomology and in fact $H^*(\G_F, \ftwo)$ contains some
further substantial information about $F$ which $H^*(G_F, \ftwo)$ does not
contain. In summary, $\G_F$ is a very interesting object. On the one hand
$\G_F$ is much simpler than $G_F$ or $G_q$, yet it contains substantial
information about the arithmetic of $F$.   In \cite[Section 2]{BLMS} the case
of $p > 2$ was also considered, and the definition of $\G_F$ was extended to
fields which contain a primitive $p^{th}$-root of unity. The key reason for
restricting to $p=2$ in earlier papers stems from the interest in quadratic
forms. Nevertheless a number of interesting properties also hold for $\G_F$ in
the case $p > 2$.

\section{Towards the Auslander-Reiten translate $\Omega^2 \ftwo$}

Because $\G_F$ contains considerable information about the arithmetic of $F$, it is a natural question to ask
for properties of $G_q$, or more precisely $G_q(F)$, determined by $\G_F$. First of all, it is possible
that for two fields $F_1$ and $F_2$, we have
\[ \G_{F_1} \cong \G_{F_2} \ \ \text{but }\ \  G_q(F_1) \ncong G_q(F_2).\]
Indeed, set $F_1 = \mathbb{C}((t_1))(( t_2))$ to be a field of formal power series in $t_2$ over the field of formal
power series in $t_1$ over $\mathbb{C}$ and $F_2$ the field $\mathbb{Q}_5$ of $5$-adic numbers.
Then $\G_{F_1} = \G_{F_2} = C_4 \times C_4$, but
$G_q(F_1) = \mathbb{Z}_2 \times \mathbb{Z}_2$ ($\mathbb{Z}_2$ is the additive group of $2$-adic numbers),
while $G_q(F_2) = \la \sigma, \tau \, | \, \tau^4[\tau^{-1}, \sigma^{-1}] = 1 \ra$\
see \cite{minac-spira-annals, Koch}.
However, in this case $G_{F_1}^{\{ 3\}} \cong G_{F_2}^{\{ 3\}}$,  as they are both  isomorphic to
$C_4 \times C_4$; see \cite{AGKM}.

The very interesting question about possible groups $G_F^{\{3\}}$ when
$G_F^{[3]}$ is given, is currently investigated by Min\'{a}\v{c}, Swallow and
Topaz \cite{MST}. In order to tackle this question it is important to
understand the universal case when $G_q(F)$ is a free pro-$2$-group. From now
on we further assume that $F^*/{F^*}^2$ is finite-dimensional.  (This is the
most important case to understand, as $G_F^{\{3\}}$ in general is the
projective limit of its finite quotients.) Let $n$ be the dimension. Then we
have $F^*/{F^*}^2 = 2^n$. In this case we have an extension
\[ 1 \lrar A \lrar G_F^{\{3\}} \lrar E \rar 1\]
where $E = \prod_1^n C_2$ and $A = \Gal(G_F^{\{ 3\}}/F^{(2)}) \cong \prod_1^m C_2$ with $m = (n-1)2^n + 1$.
 Set $R = F^{(2)}$ for simplicity.
Indeed from Kummer theory we know that
\[A \cong \Hom_{\ftwo}(R^*/{R^*}^2, \ftwo).\]
Then also by Kummer theory the minimal number of topological generators of
$G_q^{(2)}: = \Gal(F_q/ F^{(2)})$ is equal to the dimension of $R^*/{R^*}^2$.
From the topological version of Schreier's theorem we know that this number $d(G_q^{(2)})$ is
given by $d(G_q^{(2)}) = (n-1)2^n + 1$, because $G_q^{(2)}$ is the open subgroup of index $2^n$
of $G_q$ and $d(G_q) = n$. (See \cite[Example 6.3]{Koch})
Thus
\begin{eqnarray*}
 \dim_{\;\ftwo} A & = & \dim_{\;\ftwo} R^*/{R^*}^2\\
& = & \dim_{\;\ftwo} H^1(G_q^{(2)}, \ftwo )\\
& = &  d(G_q^{(2)})\\
& = &  (n-1)2^n + 1.
\end{eqnarray*}

Moreover, $A$ is a natural $\ftwo E$-module where the $E$-action is induced by conjugation in
$G_F^{\{3\}}$. The next result gives a completely arithmetical interpretation of the Auslander-Reiten
translate in our case. We write $\Omega^2_E \ftwo$ in order to stress
that we are working in the category of $\ftwo E$-modules.

\begin{thm} \cite{Gasch}
\[ \Omega^2_E \ftwo \cong A.\]
\end{thm}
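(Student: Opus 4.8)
The plan is to reinterpret $A$ as the mod-$2$ \emph{relation module} of a minimal free pro-$2$ presentation of $E$, and then to identify that relation module with $\Omega^2_E\ftwo$ via the (pro-$2$ version of the) Gruenberg resolution; this is in essence Gasch\"{u}tz's theorem \cite{Gasch}.

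First I would make the group-theoretic translation. In the universal case under consideration $G_q$ is a free pro-$2$ group of rank $n$, and by Kummer theory $E = G_q/\Phi(G_q)$ while $A \cong G_q^{(2)}/\Phi(G_q^{(2)})$ with $G_q^{(2)} = \Phi(G_q)$; the $\ftwo E$-module structure on $A$ is conjugation, which is well defined because inner automorphisms of $\Phi(G_q)$ act trivially on its maximal elementary abelian $2$-quotient, and it agrees with conjugation inside $G_F^{\{3\}} = G_q/\Phi(\Phi(G_q))$. Writing $R = \Phi(G_q)$, the extension $1 \lrar R \lrar G_q \lrar E \lrar 1$ is therefore a free pro-$2$ presentation of $E$ of minimal rank $d(E) = n$, and $A = R/[R,R]R^2$ is precisely its mod-$2$ relation module; the Schreier index formula for free pro-$2$ groups makes $R$ free of rank $(n-1)2^n+1$, recovering $\dim_{\ftwo}A = m$.

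Next I would feed this presentation into the Gruenberg resolution. Choosing free topological generators $x_1,\dots,x_n$ of $G_q$ mapping to a basis of $E$, one gets an exact sequence of $\ftwo E$-modules
\[ 0 \lrar A \lrar (\ftwo E)^n \lstk{\partial} \ftwo E \lstk{\varepsilon} \ftwo \lrar 0, \]
with $\varepsilon$ the augmentation and $\partial$ sending the $i$th basis vector to $\bar x_i - 1$. One obtains it by running the classical construction over $\bZ_2 E$, where the relation module $R^{\mathrm{ab}}$ sits in $0 \to R^{\mathrm{ab}} \to (\bZ_2 E)^n \to \bZ_2 E \to \bZ_2 \to 0$, and then reducing mod $2$: this stays exact because $R^{\mathrm{ab}}$ and the augmentation ideal of $\bZ_2 E$ are $\bZ_2$-free, so the obstructing $\operatorname{Tor}_1^{\bZ_2}(-,\ftwo)$ vanish. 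Splitting off $I := \ker\varepsilon$, the augmentation ideal of $\ftwo E$, we have $0 \to I \to \ftwo E \to \ftwo \to 0$, so $I \cong \Omega_E\ftwo$, together with $0 \to A \to (\ftwo E)^n \to I \to 0$.

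It remains to check that $(\ftwo E)^n \twoheadrightarrow I$ is a projective cover, for then its kernel $A$ is by definition $\Omega_E I = \Omega^2_E\ftwo$ (the projective-free representative, which is what the Gruenberg construction supplies). But $\ftwo E$ is local Artinian with radical $I$, so the minimal number of generators of $I = \Omega_E\ftwo$ is $\dim_{\ftwo} I/I^2 = \dim_{\ftwo} H^1(E,\ftwo) = n$; hence by Nakayama any rank-$n$ free module surjecting onto $I$ is its projective cover. This yields $A \cong \Omega^2_E\ftwo$, consistently with the numerology $\dim_{\ftwo}\Omega^2_E\ftwo = n\,2^n-(2^n-1)=(n-1)2^n+1 = m$. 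The step I expect to require the most care is the first one together with the exactness of the mod-$2$ sequence: one must verify that the conjugation $E$-action on $A$ really is the relation-module structure, and that reducing from $\bZ_2$ to $\ftwo$ preserves exactness (which is exactly where $\bZ_2$-freeness of $R^{\mathrm{ab}}$ is used). Once the Gruenberg sequence is in place, the projective-cover argument is routine, since the presentation of $E$ has minimal rank.
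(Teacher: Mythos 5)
Your proof is correct, but it takes a genuinely different route from the paper's. The paper's proof (Section 10) is an explicit generators-and-relations argument: it realizes $\Omega^2_E\ftwo$ as $\Ker\bigl(\psi\colon \oplus_{i=1}^n P_{a_i}\rightarrow \ftwo E\bigr)$, finds the generating set $S=\{\rho_ia_i,\ \rho_ja_i+\rho_ia_j\}$, derives a list of bracket identities, extracts an explicit $\ftwo$-basis $X$, matches it against a basis $Z$ of $A$ obtained from commutator computations in $G_F^{[3]}$ (citing \cite{GuTa, MST}), and then writes down the isomorphism $\phi\colon\Omega^2_E\ftwo\rightarrow A$ by hand, closing with a dimension count. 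You instead make the structural identification $A\cong R/\overline{[R,R]R^2}$ with $R=\Phi(G_q)=G_q^{(2)}$, recognize this as the mod-$2$ relation module of the minimal free pro-$2$ presentation $1\rightarrow R\rightarrow G_q\rightarrow E\rightarrow 1$, and invoke the Gruenberg four-term exact sequence over $\mathbb{Z}_2E$, reduced mod $2$; the projective-cover step using locality of $\ftwo E$ then pins down the syzygy, with Schreier's formula supplying the dimension check. Your argument is cleaner, closer in spirit to Gasch\"utz's original proof, and would generalize immediately to odd $p$; the paper deliberately takes the computational route because (as it says explicitly) the generators, relations, and basis $X$ it produces are needed in the following section to build the middle term $M$ of the Auslander-Reiten sequence ending in $\ftwo$ and the group $T(n)$. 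Two points you flagged as needing care are indeed the load-bearing ones and you handled them correctly: that the conjugation action on $A$ agrees with the relation-module action, and that $\mathbb{Z}_2$-freeness of the augmentation ideal (and of $R^{\mathrm{ab}}$, to ensure $R^{\mathrm{ab}}\otimes\ftwo=A$ with no torsion term) makes the mod-$2$ reduction exact.
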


We sketch in the next section a direct proof of this theorem as it also
provides some information about the structure of our module $A$. After that we
shall consider the AR sequence
\[ 0 \lrar \Omega^2 \ftwo \lrar M \lrar \ftwo \lrar 0\]
and interpret $M$ and also its associated group $AR(E)$ which will be defined later below as
a certain Galois group.

\section{Generators and relations for $\Omega^2 \ftwo$}

Recall that $E = \prod_1^n C_2 = \prod_1^n\la \sigma_i \ra$. We shall first
use the definition of $\Omega^2 \ftwo$ over $\ftwo E$ to describe $\Omega^2
\ftwo$ via generators and relations and then we shall see that indeed $A$ has
this presentation. This, in turn, will imply that  $A \cong \Omega^2 \ftwo$.

Let $P_{a_i}$ be a free $\ftwo E$ module of rank 1 generated by $a_i$ for $1
\le i \le n$, so $P_{a_i} =  \ftwo E a_i$. Then recall that $\Omega^2 \ftwo$
is defined by the short exact sequence:
\[ 0 \lrar \Omega^2 \ftwo \lrar \oplus_{i = 1}^n P_{a_i} (= P) \lstk{\psi} \ftwo E \stk{\epsilon} \ftwo \lrar 0,\]
where $\epsilon \colon \ftwo E \rar \ftwo$ is the augmentation map $ \epsilon
(\sigma_i) = 1$ for all $i$, and $\psi(a_i) = \sigma_i - 1$. We shall denote
$\sigma_i -1$ by $\rho_i$ for simplicity. Exactness at $\ftwo$ is trivial and
exactness at $\ftwo E$ is a standard exercise which we leave to the reader.
Observe that $\Omega^2 \ftwo$ is determined only up to a projective summand.
But since we insist that $\Omega^2 \ftwo$ is projective-free, we actually get
it up to an isomorphism of $\ftwo E$-modules.

Note that $\Omega^2 \ftwo  = \Ker \, \psi$, and it certainly contains the $\ftwo E$-submodule of $P$
generated by
\[S = \{ \rho_i a_i, \rho_j a_i + \rho_i a_j,   1\le  i < j \le n. \}\]
We claim that the $\ftwo E$ submodule $W$ generated by $S$ is the entire
module $\Omega^2 \ftwo$. First observe that $I = \Omega^1 \ftwo = \Ker \,
\epsilon$ has dimension $2^n -1$ and
 $\rho_T = \prod_{i \in T } \rho_i$, where $T$ is a non-empty subset of $\{1, 2, \cdots, n\}$
form a basis of $I$. This allows us to pick a nice $\ftwo$-vector space section of $\psi$:
$V$ the vector space span of the set
\[ U = \{ \rho_T a_i \ \ 1 \le i \le n, \text{and } T \subseteq \{i +1, i+2 , \cdots, n \} \}.\]
 It is not hard to see
that $P$ splits as $\Ker \, \psi \oplus V$ as $\ftwo$-vector spaces. Therefore
it is enough to show that $W \subseteq \ker \, \psi$ together with $V$
generate the  entire module $P$ over $\ftwo$. Set $Q := W \oplus V$, the
vector span of $V$ and $W$ over $\ftwo$. Since $\{a_1, a_2, \cdots, a_n\}
\subseteq Q$ and $\{a_1, a_2, \cdots, a_n \}$  generates $P$ as an $\ftwo
E$-module, it is sufficient to show that $Q$ is an $\ftwo E$ submodule of $P$.
Because $W$ is an $\ftwo E$-submodule of $Q$ it is enough to show that for
each $v \in U$ and $\rho_l$ we also have $\alpha := \rho_l v$ in $Q$. Write $v
= \rho_T a_i$ for some $T$ and $i$. If $l = i$, then $\alpha$ belongs to $W$.
If $l > i$, then $\alpha$ belongs to $U \cup \{0 \}$. In particular, in both
cases, $\alpha \in Q$. If $l < i$, then
\begin{eqnarray*}
\alpha & = &  \rho_T \rho_l a_i \\
       & = & \rho_T (\rho_l a_i + \rho_i a_l) + \rho_T \rho_i a_l \\
       & = & w + v
\end{eqnarray*}
where the first summand belongs to $W$ and the second summand belongs to $U \cup \{ 0 \} \subseteq V$.
Hence $\alpha$ belongs to $Q$, as desired. This shows that $\Omega^2 \ftwo$ is generated over $\ftwo E$ by  the set $S$.

Let us denote:
\[ [a_i, a_j, a_{t_1}, \cdots , a_{t_r}] : = \rho_{t_r}\rho_{t_{r-1}} \cdots \rho_{t_1} (\rho_i a_j + \rho_j a_i),  \
 \text{and}\]
 \[ [{a_i}^2,  a_{t_1}, \cdots , a_{t_r}] : = \rho_{t_r}\rho_{t_{r-1}} \cdots \rho_{t_1} \rho_i a_i, \]
where all the subscripts range over the set $\{1, 2, \cdots, n\}$. Then we
have the following identities. Below $\sigma$ denotes an arbitrary permutation
of the set $\{1, 2, 3, \cdots, s\}$, and $b_i = a_{t_i}$ for any $i$.
\begin{enumerate}
\item $[a_i, a_j] = [a_j, a_i]$
\item $[a_i, a_j, b_1, \cdots, b_s] = [a_i, a_j, b_{\sigma(1)}, \cdots , b_{\sigma(s)}]$
\item $[a_i^2, b_1, \cdots , b_s] = [a_i^2, b_{\sigma(1)}, \cdots,
b_{\sigma(s)}]$
\item $[a_i, a_j, a_r] + [a_j,  a_r,  a_i] + [a_r,  a_i, a_j] = 0$
\item $[a_i^2, a_r] = [a_i, a_r, a_i]$
\item $[a_i, a_j, b_1, \cdots, b_{\alpha}, \cdots, b_{\alpha}, \cdots, b_s] = 0$
\end{enumerate}

Then $\Omega^2 \ftwo \subseteq P$ is a vector span over $\ftwo$ of $[a_i, a_j,
a_{t_1}, \dots a_{t_r}]$ and $[a_i^2, a_{t_1}, \cdots a_{t_r}]$ as above. In
fact from the short exact sequence
\[ 0 \lrar \Omega^2 \ftwo \lrar P \lstk{\psi} \ftwo E \stk{\epsilon} \ftwo \lrar 0,\]
we see that $\dim_{\ftwo} \Omega^2 \ftwo = n2^n - 2^n + 1 = 2^n(n-1) + 1$.
With what follows using the above identities we see that the set
\[ X := \{ [c_{ij}, a_{t_1}, \cdots, a_{t_r}]  \, |\, 1\le i \le j \le n, i < t_1 < \cdots < t_r \le n\}\]
generates $\Omega^2 \ftwo$ over $\ftwo$. Here $c_{ij} = [a_i ,a_j]$ if $i < j$ or $a_i^2$
if $i = j$. Observe that
\[|X| = \sum_{i = 1}^n (n-i+1) 2^{n-i}. \]
Indeed for a fixed $i$ there are $n-i+1$ possibilities for $j$ and there are
$2^{n-i}$ possibilities for the number of subsets of $\{i+1, \cdots, n \}$ for
the choices of $t_1, \cdots, t_r$. Also we have
\[ \sum_{i=1}^n (n-i+1) Y^{n-i} = \sum_{i=1}^n (Y^{n-i+1})' = \frac{(n+1)Y^n(Y-1)-(Y^{n+1}-1)}{(Y-1)^2}.\]
Plugging $Y = 2$, we see that $|X| = \sum_{i=1}^n (n-i+1)2^{n-i} = 2^n (n-1) +
1$. Hence $X$ is the basis of $\Omega^2 \ftwo$ over $\ftwo$. On the other
hand, a detailed investigation of identities in $G_F^{[3]}$ (see \cite{GuTa, MST}) shows
that a basis of $A$ over $\ftwo$ is
\[ Z = \{ [d_{ij}, \sigma_{t_1}, \cdots \sigma_{t_r}] \, | \,1 \le i \le j \le n, i < t_1 < \cdots < t_r \le n \},\]
$d_{ij} = [\sigma_i, \sigma_j]$ if $i < j$, $d_{ii} = \sigma_i^2$, and
$[\sigma_1, \sigma_2, \cdots, \sigma_l] = [\cdots[[[\sigma_1, \sigma_2],
\sigma_3], \cdots, \sigma_l]$. In fact, writing $\sigma_i$ in place of $a_i$
above, the same set of 6 identities hold in $G_F^{[3]}$. This shows that the
map $\phi \colon \Omega^2 \ftwo \lrar A$ defined by $\phi(a_i^2) =
\sigma_i^2$, and $\phi([a_i, a_j]) = [\sigma_i, \sigma_j]$ is a well-defined
surjective $\ftwo E $ homomorphism and because of dimensional reasons it has to
be an isomorphism.

\section{The Auslander-Reiten sequence ending in $\ftwo$}

Finally we want to obtain the Auslander-Reiten sequence ending in $\ftwo$:
\[ 0 \lrar \Omega^2 \ftwo \lrar M \lstk{\eta} \ftwo \lrar 0.\]
We will get a sequence of this form which is not split. Then simply because
\[\Ext_{\ftwo E}^1(\ftwo, \Omega^2 \ftwo) = \ftwo\]
the non-split sequence we obtain has to be an Auslander-Reiten sequence; see
\cite[Proposition (78.28)]{Curtis-Reiner}.

Observe from our proof of our description of $\Omega^2 \ftwo \subseteq P$,
that $a:= \rho_n \rho_{n-1} \cdots \rho_2 a_1$ belongs to $P - \Omega^2
\ftwo$.  Let $M := \Omega^2 \ftwo + \{a, 0\} \subseteq P$. (Because $\rho_i a$
belongs to $\Omega^2 \ftwo$ for each $i = 1, 2, \cdots, n$, we see that $M$ is
indeed an $\ftwo E$-module.) Now,  let $\eta \colon M \rar \ftwo$ be defined
by $\eta (\Omega^2 \ftwo) = 0 $ and $\eta (a) = 1$. Observe that since $\rho_1
a$ belongs to $\Omega^2 \ftwo$ and $\rho_i a = 0$ for $i \ge 2$, we obtain
that $\eta$ is indeed an $\ftwo E$-homomorphism.

We claim that the short exact sequence
\[ 0 \lrar \Omega^2 \ftwo \lrar M \lstk{\eta} \ftwo \lrar 0\]
is an Auslander-Reiten sequence. Suppose to the contrary, this sequence
splits. Then the coset $a + \Omega^2 \ftwo$ would contain a trivial element $t
= a + m$, $m \in \Omega^2 \ftwo$ (i.e., the action of each $\rho_i$ on $t$ is
$0$). But $t \in \text{Soc} (P) \subseteq \Omega^2 \ftwo$. Hence $t - m = a$
belongs to $\Omega^2 \ftwo$, which is a contradiction. (Here $\text{Soc} (P)$
is the socle of $P$ which is the submodule of $E$ consisting of the fixed elements
of $P$ under the action of $E$.) So we are done.

We now introduce Auslander-Reiten groups. Let $E = \prod_{i=1}^n C_2 =
\prod_{i=1}^n \la \sigma_i\ra$ as before. Assume that $n \ge 2$. Consider the
Auslander-Reiten sequence
\[ 0 \lrar \Omega^2 \ftwo \lstk{h} M \lrar \ftwo \lrar 0. \]
Consider the long exact sequence in cohomology associated to the above AR
sequence.
As observed before, all connecting
homomorphisms are zero except the first one. In particular, we have a short
exact sequence:
\[ 0 \lrar H^2(E, \Omega^2 \ftwo) \lrar H^2(E, M) \lrar H^2(E, \ftwo) \lrar 0. \]
Furthermore, $H^2(E , \Omega^2 \ftwo) \cong \ftwo$. The unique non-zero
element of $H^2(E, \Omega^2 \ftwo)$ corresponds to a short exact sequence
\[1 \lrar \Omega^2 \ftwo \lrar G_F^{\{3\}} \lrar E \lrar 1\]
where $|F^*/{F^*}^2| = 2^n$ and $\Gal(F_q/F)$ is a free pro-$2$-group. Note
that in our case $G_F^{\{3\}}$ does not depend on $F$, but only on the number $n$
of the rank of the free absolute pro-$2$-group $G_F$. Therefore we shall
denote it by $G^{\{3\}}(n)$. Now consider the following commutative diagram
\[
\xymatrix{
1 \ar[r] & \Omega^2 \ftwo \ar[r] \ar[d]^h & G^{\{3\}}(n) \ar[r] \ar[d] & E \ar[r] \ar[d]^{=} & 1 \\
1 \ar[r] & M \ar[r] & T(n) \ar[r]  & E \ar[r] & 1 }
\]
Here $T(n)$ is the pushout of $G^{\{3\}}(n)$ and $M$ along $\Omega^2 \ftwo$. The
second row corresponds to the image of the non-trivial element of $H^2(E,
\Omega^2 \ftwo)$ in $H^2(E, M)$.

\begin{defn}
We call $T(n)$ the Auslander-Reiten group associated to $E$.
 \end{defn}

Observe that we have obtained a Galois theoretic model for our
Auslander-Reiten sequence:
\[ 0 \lrar \Omega^2 \ftwo \lrar M \lrar \ftwo \lrar 0. \]
Indeed, let $K/L$ be a Galois extension as above such that $\Gal(K/F) = T(n)$.
Then we have a short exact sequence
\[ 0 \lrar M \lrar T(n) \lrar G \lrar 0\]
where AR Galois group $T(n)$ is generated by $G^{\{3\}}(n)$ and $\tau$ subject
to the relations $\tau^2 = 1$,  $[\tau, \sigma_i] = 1$ for $2 \le i \le n$,
and $[\tau, \sigma_1] = [\sigma_1^2 , \sigma_2, \cdots, \sigma_n]$. Let $L$ be
the fixed field of $M$. Then the Galois group $\Gal(K/L) \cong M$ is the
middle term of our Auslander-Reiten sequence. Further, let $H$ be a subgroup
generated by $\sigma_1, \cdots, \sigma_n$. Then $H \cong G^{\{3\}}(n)$ and $H \cap M \cong \Omega^
2\ftwo$. Hence our Auslander-Reiten sequence has the form
\[ 0 \lrar H \cap \Gal(K/L) \lrar \Gal(K/L) \lrar \ftwo \lrar 0.\]

\section{Conclusion}
We hope that our hot and mild appetizers will inspire our readers in their choices of
further delightful, tasty, and mouth-watering main dishes.  John Labute's skillful
interplay between Lie theoretic and group theoretic methods in his past work (see
for example \cite{Labute-67}), as well as in his recent work (\cite{Labute-06}) provides us with
inspiration for exploiting the connections between modular representation theory
and Galois theory. We are now lifting our glasses of champagne in honour of
John Labute and we are wishing him happy further research, and pleasant
further Auslander-Reiten encounters.


\begin{thebibliography}{AGKM01}

\bibitem[AGKM01]{AGKM}
Alejandro Adem, Wenfeng Gao, Dikran~B. Karagueuzian, and J{\'a}n
  Min{\'a}{\v{c}}.
\newblock Field theory and the cohomology of some {G}alois groups.
\newblock {\em J. Algebra}, 235(2):608--635, 2001.

\bibitem[AKM99]{adem-dikran-minac}
Alejandro Adem, Dikran~B. Karagueuzian, and J{\'a}n Min{\'a}{\v{c}}.
\newblock On the cohomology of {G}alois groups determined by {W}itt rings.
\newblock {\em Adv. Math.}, 148(1):105--160, 1999.

\bibitem[AR75]{AR}
Maurice Auslander and Idun Reiten.
\newblock Representation theory of {A}rtin algebras. {III}. {A}lmost split
  sequences.
\newblock {\em Comm. Algebra}, 3:239--294, 1975.

\bibitem[ARS95]{aus-rei-sma}
Maurice Auslander, Idun Reiten, and Sverre O. Smal{\o}.
\newblock {\em Representation theory of {A}rtin algebras}, volume~36 of {\em
  Cambridge Studies in Advanced Mathematics}.
\newblock Cambridge University Press, Cambridge, 1995.

\bibitem[BCCM07]{CCM3}
David~J. Benson, Sunil~K. Chebolu, J.~Daniel Christensen, and J{\'a}n
  Min{\'a}{\v{c}}.
\newblock The generating hypothesis for the stable module category of a
  {$p$}-group.
\newblock {\em J. Algebra}, 310(1):428--433, 2007.

\bibitem[Ben84]{ben-trends}
David Benson.
\newblock {\em Modular representation theory: new trends and methods}, volume
  1081 of {\em Lecture Notes in Mathematics}.
\newblock Springer-Verlag, Berlin, 1984.

\bibitem[BLMS07]{BLMS}
Dave Benson, Nicole Lemire, J{\'a}n Min{\'a}{\v{c}}, and John Swallow.
\newblock Detecting pro-{$p$}-groups that are not absolute {G}alois groups.
\newblock {\em J. Reine Angew. Math.}, 613:175--191, 2007.

\bibitem[Car96]{carlson-modulesandgroupalgebras}
Jon~F. Carlson.
\newblock {\em Modules and group algebras}.
\newblock Lectures in Mathematics ETH Z\"urich. Birkh\"auser Verlag, Basel,
  1996.
\newblock Notes by Ruedi Suter.

\bibitem[CCM07a]{CarCheMin2}
Jon~F. Carlson, Sunil~K. Chebolu, and J\'{a}n Min\'{a}\v{c}.
\newblock Freyd's generating hypothesis using almost split sequences.
\newblock 2007.
\newblock preprint.

\bibitem[CCM07b]{CarCheMin}
Jon~F. Carlson, Sunil~K. Chebolu, and J\'{a}n Min\'{a}\v{c}.
\newblock Finite generation of Tate cohomology.
\newblock 2007.
\newblock preprint.

\bibitem[CCM07c]{CCM4}
Sunil~K. Chebolu, J.~Daniel Christensen, and J\'{a}n Min\'{a}\v{c}.
\newblock Freyd's generating hypothesis for groups with periodic cohomology.
\newblock 2007.
\newblock preprint, arXiv:0710.3356.

\bibitem[CCM08a]{CCM2}
Sunil~K. Chebolu, J.~Daniel Christensen, and J{\'a}n Min{\'a}{\v{c}}.
\newblock Groups which do not admit ghosts.
\newblock {\em Proc. Amer. Math. Soc.}, 136(4):1171--1179, 2008.

\bibitem[CCM08b]{CCM}
Sunil~K. Chebolu, J.~Daniel Christensen, and J\'{a}n Min\'{a}\v{c}.
\newblock Ghosts in modular representation theory.
\newblock {\em Advances in Mathematics}, 217:2782--2799, 2008.

\bibitem[CR87]{Curtis-Reiner}
Charles~W. Curtis and Irving Reiner.
\newblock {\em Methods of representation theory. {V}ol. {II}}.
\newblock Pure and Applied Mathematics (New York). John Wiley \& Sons Inc., New
  York, 1987.
\newblock With applications to finite groups and orders, A Wiley-Interscience
  Publication.

\bibitem[Fre66]{freydGH}
Peter Freyd.
\newblock Stable homotopy.
\newblock In {\em Proc. Conf. Categorical Algebra (La Jolla, Calif., 1965)},
  pages 121--172. Springer, New York, 1966.

\bibitem[Gas54]{Gasch}
Wolfgang Gasch{\"u}tz.
\newblock \"{U}ber modulare {D}arstellungen endlicher {G}ruppen, die von freien
  {G}ruppen induziert werden.
\newblock {\em Math. Z.}, 60:274--286, 1954.

\bibitem[GT67]{GuTa}
Narain~D. Gupta and Se{\'a}n~J. Tobin.
\newblock On certain groups with exponent four.
\newblock {\em Math. Z.}, 102:216--226, 1967.

\bibitem[HLP07]{GH-D(R)}
Mark Hovey, Keir Lockridge, and Gena Puninski.
\newblock The generating hypothesis in the derived category of a ring.
\newblock {\em Mathematische Zeitschrift}, 256(4):789--800, 2007.

\bibitem[Koc02]{Koch}
Helmut Koch.
\newblock {\em Galois theory of {$p$}-extensions}.
\newblock Springer Monographs in Mathematics. Springer-Verlag, Berlin, 2002.
\newblock With a foreword by I. R. Shafarevich, Translated from the 1970 German
  original by Franz Lemmermeyer, With a postscript by the author and
  Lemmermeyer.

\bibitem[Kra00]{Krause-AR}
Henning Krause.
\newblock Auslander-{R}eiten theory via {B}rown representability.
\newblock {\em $K$-Theory}, 20(4):331--344, 2000.
\newblock Special issues dedicated to Daniel Quillen on the occasion of his
  sixtieth birthday, Part IV.

\bibitem[Lab67]{Labute-67}
John~P. Labute.
\newblock Classification of {D}emu\v{s}hkin groups.
\newblock {\em Canad. J. Math.}, 19:106--132, 1967.

\bibitem[Lab06]{Labute-06}
John Labute.
\newblock Mild pro-{$p$}-groups and {G}alois groups of {$p$}-extensions of
  {$\Bbb Q$}.
\newblock {\em J. Reine Angew. Math.}, 596:155--182, 2006.

\bibitem[Lam05]{Lam}
T.~Y. Lam.
\newblock {\em Introduction to quadratic forms over fields}, volume~67 of {\em
  Graduate Studies in Mathematics}.
\newblock American Mathematical Society, Providence, RI, 2005.

\bibitem[Loc07]{keir}
Keir Lockridge.
\newblock The generating hypothesis in the derived category of {$R$}-modules.
\newblock {\em Journal of Pure and Applied Algebra}, 208(2):485--495, 2007.

\bibitem[MMS04]{Louis-Minac-Smith}
Louis Mah{\'e}, J{\'a}n Min{\'a}{\v{c}}, and Tara~L. Smith.
\newblock Additive structure of multiplicative subgroups of fields and {G}alois
  theory.
\newblock {\em Doc. Math.}, 9:301--355 (electronic), 2004.

\bibitem[MS90]{minac-spira-mathz}
J{\'a}n Min{\'a}{\v{c}} and Michel Spira.
\newblock Formally real fields, {P}ythagorean fields, {$C$}-fields and
  {$W$}-groups.
\newblock {\em Math. Z.}, 205(4):519--530, 1990.

\bibitem[MS96]{minac-spira-annals}
J{\'a}n Min{\'a}{\v{c}} and Michel Spira.
\newblock Witt rings and {G}alois groups.
\newblock {\em Ann. of Math. (2)}, 144(1):35--60, 1996.

\bibitem[MST]{MST}
J{\'a}n Min{\'a}{\v{c}}, John Swallow, and Adam Topaz.
\newblock The structure of Galois meta-abelian groups of exponent 4.
\newblock {\em (In preparation.)}

\end{thebibliography}

\end{document}